\newcommand*\bigcdot{\mathpalette\bigcdot@{.3}}
\newtheorem{theorem}{Theorem}
\newtheorem{proposition}[theorem]{Proposition}
\newtheorem{lemma}[theorem]{Lemma}
\theoremstyle{definition}
\theoremstyle{remark}
\newtheorem{remark}{Remark}
\title{Time reversal of reflected Brownian motion with Poissonian resetting}
\author{
	Fausto Colantoni\thanks{Sapienza University of Rome, Rome, Italy; and BCAM--Basque Center for Applied Mathematics, Bilbao, Spain Email: \texttt{fausto.colantoni@uniroma1.it}} \and
	Mirko D'Ovidio\thanks{Sapienza University of Rome, Rome, Italy. Email: \texttt{mirko.dovidio@uniroma1.it}} \and
	Gianni Pagnini\thanks{BCAM--Basque Center for Applied Mathematics, Bilbao, Spain; and Ikerbasque--Basque Foundation for Science, Bilbao, Spain. Email: \texttt{gpagnini@bcamath.org}}
}
\date{\today}
\begin{document}
	
	\maketitle
	
	\begin{abstract}
		In this paper, we study reflecting Brownian motion with Poissonian resetting. After providing a probabilistic description of the phenomenon using jump diffusions and semigroups, we analyze the time-reversed process starting from the stationary measure. We prove that the time-reversed process is a Brownian motion with a negative drift and non-local boundary conditions at zero. Moreover, we further study the time-reversed process between two consecutive resetting points and show that, within this time window, it behaves as the same reflecting Brownian motion with a negative drift, where both the jump sizes and the time spent at zero coincide with those of the process obtained under the stationary measure. We characterize the dynamics of both processes, their local times, and finally investigate elliptic problems on positive half-spaces, showing that the two processes leave the same traces at the boundary.
	\end{abstract}
	
	\bigskip
	
	\noindent
	\textbf{Keywords:} Stochastic resetting, non-local operators, boundary conditions, Brownian motion, time reversal
	
	\medskip
	
	\noindent
	\textbf{MSC Classification:} 60J60, 60J50, 60H30, 35R11
	
	\section{Introduction}
One-dimensional Brownian motion with Poissonian resetting at a constant rate \( r \) is a well-studied model in the context of stochastic processes with restarts. The resetting mechanism is relevant in search problems, because it allows for transforming an infinite mean first-passage time into a finite one. For an overview of the topic and related developments, we refer to the review \cite{evansreview}.

We consider a Brownian particle that is periodically reset to a fixed point. The resetting occurs at random times following a Poisson process, meaning that the inter-reset times are exponentially distributed. Specifically, at any instant, a reset occurs with constant rate \( r \), independently of the time passed since the last reset.  

We will use the probabilistic definition provided in \cite{sde-resetting}, which characterizes the process as the solution of a stochastic differential equation with jumps. Our focus is on reflecting Brownian motion at zero, with Neumann boundary conditions, subjected to Poissonian stochastic resetting that returns the process to zero, where it then reflects in the positive half-line.  

In this paper, we investigate the time-reversed process of reflecting Brownian motion with Poissonian resetting and its connection to Non-Local Boundary Value Problems (NLBVPs). This is a novel topic in probability theory, where local derivatives are considered inside the domain, while non-local operators, such as Marchaud or Caputo derivatives, are applied at the boundary, see \cite{boncoldovpag2024,colantoni2023non}.  The analysis of time-reversed processes between resetting events is crucial as it 
	addresses a significant conceptual gap in the stochastic thermodynamics of resetting 
	systems, particularly regarding the formulation of fluctuation theorems for Brownian 
	motion with Poissonian resetting \cite{fluctuation}. In this context, Brownian motion 
	with resetting reaches a non-equilibrium steady state (NESS), and we identify how 
	detailed balance is modified in the presence of resetting.

To the best of our knowledge, there are currently no theoretical results for the time reversal of jump-diffusions. Existing results include the time reversal formula for drift-diffusions \cite{reverse-diffusion1,reverse-diffusion2}, studies on reflecting boundaries in this context \cite{reverse-reflected}, and the case of Markov processes with drift and jumps \cite{conforti22}.  

We first focus on the case where the initial distribution coincides with the stationary measure, as in \cite{reverse-jumps}. This choice has a dual interpretation: from a physical perspective, it represents the equilibrium state the process seeks, ensuring consistency between the original and time-reversed processes; from a probabilistic perspective, if the processes are initially distributed as the stationary distribution, then the semigroup of the time-reversed process coincides with that of a Brownian motion with negative drift satisfying the NLBVP.

In addition, we examine the time-reversed process between two consecutive resetting points and provide that the same Brownian motion with negative drift naturally emerges. Moreover, we show that, in distribution, the position of the jumps and the time between jumps in the process governed by the NLBVP coincide with the position reached before the reset and the local time accumulated at zero for the paths between two consecutive resetting points. Therefore, by applying the Markov property, we can study each segment of the paths separately and recover the same conclusions obtained when starting from the stationary distribution.

To completely characterize the boundary behavior, we analyze functionals at zero. Interestingly, while the origin is an attractive point for reflecting Brownian motion with resetting, it acts as a repelling point for Brownian motion with NLBVPs, as trajectories tend to be pushed away. Despite the clear difference in the paths, we show that the law of local times at zero is the same for the processes. This allows us to study the trace processes on the positive half-space and prove that their traces also coincide.

For the convenience of the reader, we provide below a list of the stochastic processes defined throughout the manuscript:

\begin{itemize}
	\item[-] $B$: one-dimensional Brownian motion.
	\item[-] $B^+$: reflecting Brownian motion on the positive half-line.
	\item[-] $N$: Poisson process with rate $r$.
	\item[-] $X$: Brownian motion with Poissonian resetting, see \eqref{sde:BMresetting}.
	\item[-] $X^+$: reflecting Brownian motion with Poissonian resetting, see \eqref{sde:reflectedresetting}.
	\item[-] $\widetilde{B}$: reflecting Brownian motion with drift $-2\sqrt{r}$ on the positive half-line.
	\item[-] $\widetilde{X}$: Brownian motion with drift $-2\sqrt{r}$ and reflection/jumps at zero, see \eqref{pallino}.
	\item[-] $X^R$: time-reversed version of $X^+$.
\end{itemize}

\section{Brownian motion with stochastic resetting on the half-line}
\label{sec:bmrestting}
Let \( B := \{B_t, t \geq 0 \} \) be the one-dimensional Brownian motion with law \( g(t, z) = e^{-z^2/ 4t}/\sqrt{4 \pi t} \), and \( N := \{N_t, t \geq 0 \} \) be an independent Poisson process with intensity \( r>0 \). The idea is to use the Poisson process to formulate a stochastic differential equation with jumps, a relevant reference on this topic is \cite{bass-sde-jumps}. We proceed as in \cite[Formula (4)]{sde-resetting} and define the jump-diffusion process:
\begin{align}
	\label{sde:BMresetting}
	dX_t = dB_t + (x_r - X_t) dN_t , \quad X_0 = x,
\end{align}
where \( x \) is the starting point and \( x_r \) the resetting point, with \( x, x_r \in \mathbb{R} \). Intuitively, when the Poisson process does not jump, \( X \) is simply a Brownian motion, but when a jump occurs, it restarts from the position \( x_r \). The same idea was also used in \cite{pal2017integral}.

Now, we compute the generator by reasoning as in \cite{generators}. Considering a small \( t > 0 \), since the probability that \( N_t \) makes two jumps is \( o(t^2) \), we focus only on the probability of having zero or one jump:
\begin{align*}
	\mathbf{P}(N_t=0) &= e^{-rt} = 1 - rt + o(t^2), \\
	\mathbf{P}(N_t=1) &= rt\,e^{-rt} = rt + o(t^2).
\end{align*}
For a continuous and bounded function \( f \) such that the infinitesimal generator of \( X \) is well-defined, using the notation \( \mathbf{E}_x \) for the expected value with respect to \( \mathbf{P}_x \), where \( x \) is the starting point, we have:
\begin{align}
	\label{conti:generatore}
	\mathbf{E}_x [f(X_t)] &= (1 - rt + o(t^2)) \mathbf{E}_x[f(X_t) \vert N_t=0] + (rt + o(t^2)) \mathbf{E}_x[f(X_t) \vert N_t=1] \notag\\
	&= (1 - rt + o(t^2)) \mathbf{E}_x[f(B_t)] + (rt + o(t^2)) \mathbf{E}_x[f(x_r)]	\notag\\
	&= \mathbf{E}_x[f(B_t)] + rt \left(f(x_r) - \mathbf{E}_x[f(B_t)] \right) + o(t^2).
\end{align}
By Taylor's formula, we know:
\begin{align}
	\label{conti:generatoreBM}
	\mathbf{E}_x[f(B_t)] &= f(x) + \frac{d}{dx} f(x) \mathbf{E}_x[B_t] + \frac{1}{2} \frac{d^2}{dx^2} f(x) \mathbf{E}_x[(B_t)^2] + o(t^{3/2}) 	\notag\\
	&= f(x) + t \frac{d^2}{dx^2} f(x) + o(t^{3/2}),
\end{align}
where we used \( \mathbf{E}_x[B_t] = 0 \), \( \mathbf{E}_x[(B_t)^2] = 2t \), and \( \mathbf{E}_x[(B_t)^3] \sim t^{3/2} \). Combining \eqref{conti:generatore} and \eqref{conti:generatoreBM}, we obtain the infinitesimal generator \( A \) as:
\begin{align}
	\label{generator}
	Af(x) := \lim_{t \downarrow 0} \frac{\mathbf{E}_x [f(X_t)] - f(x)}{t} = \frac{d^2}{dx^2} f(x) + r (f(x_r) - f(x)).
\end{align}
We observe that this infinitesimal generator can be seen as the sum of a diffusive part and a jump part, given by
\begin{align}
	\label{jump-measure}
	r (f(x_r) - f(x))= \int_\mathbb{R} (f(y) - f(x)) J(dy),
\end{align}
with $J(dy)=r \delta_{x_r}(dy)$. It agrees with the theory of \cite{bass-adding}, where jumps are added to continuous stochastic processes.

We now move on to the study of the probability density function. Recall that if \(T_r\) is the last jump for the Poisson process \(N_t\), then \(t - T_r \sim \text{Exp}(r)\). By conditioning on the jumps of \(N\), as in \cite[Section 6.1.1]{sandev-iomin}, such that there are no more jumps after a certain interval, and considering that after a jump the Brownian motion restarts from the position \(x_r\), we obtain
\begin{align}
	\label{density}
	\mathbf{P}_x (X_t \in dy) = e^{-rt} \mathbf{P}_x(B_t \in dy) + \int_{0}^{t} r e^{-rs} \mathbf{P}_{x_r} (B_s \in dy) \, ds,
\end{align}
which coincides with the one already computed in \cite[Formula (5)]{evans2014}.

We turn our attention to the process of interest. Let $B^+:=\{B_t^+, t \geq 0 \}$ be a one dimensional reflected Brownian motion, independent of $N$.  We know that \(B^+\) is a Brownian motion on the positive half-line that reflects instantaneously to \((0, \infty)\) every time it touches zero.
Then, we introduce the process with resetting  similarly to \eqref{sde:BMresetting}
\begin{align}
	\label{sde:reflectedresetting}
	dX_t^+= dB_t^+  - X_t^+ dN_t , \quad X_0^+ =x,
\end{align}
its generator is denoted by \((A^+, {D}(A^+))\), with $A^+=A$ for \(x_r=0\), and by proceeding as in \eqref{density}, we have that we  write the probability density function as
\begin{align}
	\label{density-reflected}
	\mathbf{P}_x (X_t^+ \in dy) &= e^{-rt} \mathbf{P}_x(B_t^+ \in dy) + \int_{0}^{t} r e^{-rs} \mathbf{P}_{0} (B_s^+ \in dy) \, ds \notag\\
	&=e^{-rt} [g(t,x+y) + g(t, y-x) dy] + \int_{0}^{t} 2r e^{-rs} g(s,y) dy\, ds\,
\end{align}
where we have used, \cite[Section 1]{ItoMckean}, that
\begin{align*}
	\mathbf{P}_x(B_t^+ \in dy)= \left(g(t,x+y) + g(t, y-x)\right) dy.
\end{align*}
From the definition via the SDE \eqref{sde:reflectedresetting} and the previous discussion on stochastic resetting, we have that \(X^+\) is a reflecting Brownian motion with Poissonian resetting at zero. Therefore, we expect that, compared to \(B^+\), the process \(X^+\) will tend to reach zero more frequently, depending on the parameter \(r\), and that this will lead to a difference in the functionals at the origin.
\begin{figure}[h]
	\centering
	\includegraphics[width=13cm]{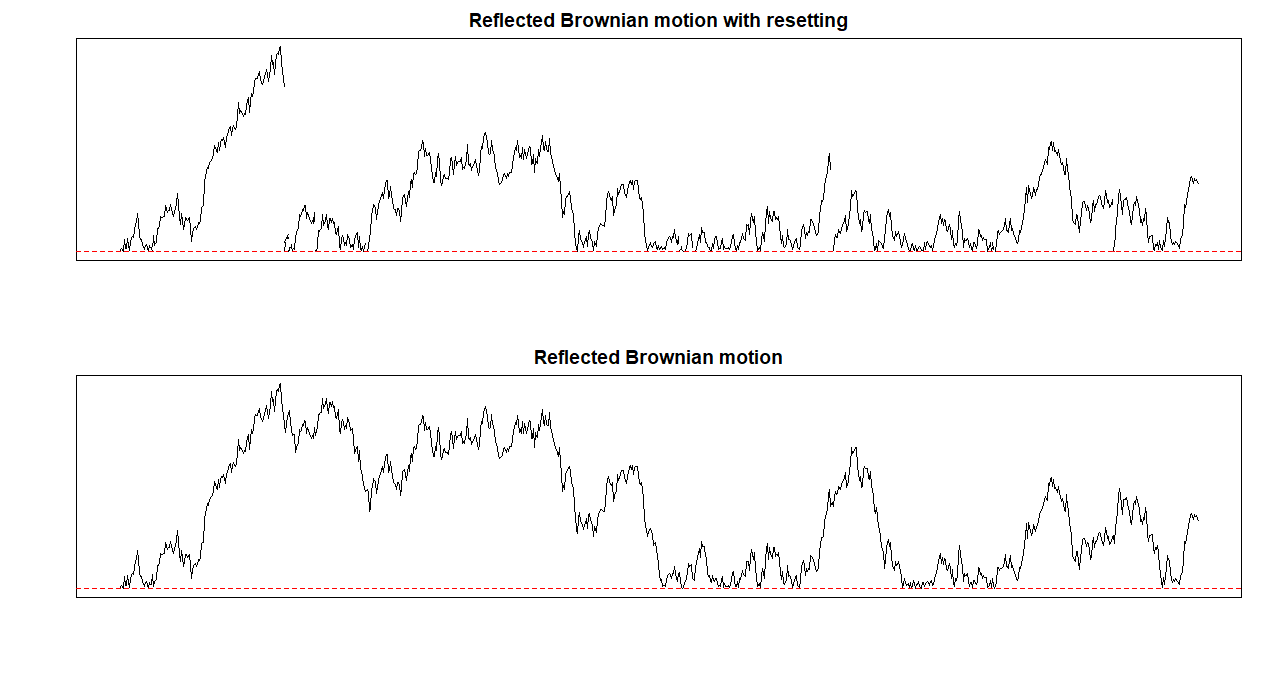} 
	\caption{A possible path for $X^+$ with $r=2$ (above) and $B^+$ (below).}
	\label{fig:resetting}
\end{figure}
We now provide that the boundary conditions remain invariant, even with resetting. We consider the generator $(A^+, {D}(A^+))$
\[
A^+f(x)= \frac{d^2}{dx^2} f(x) + r (f(0) - f(x))\]
with domain
\begin{align}
	\label{DomA}
	{D}(A^+)=\big\{\varphi \in C_b[0,\infty) \cap C^2(0,\infty):\, \varphi^\prime (0)=0 \big\}
\end{align}
\begin{theorem}
	\label{thm:resetting-refl}
	The probabilistic representation of the solution $u \in C((0,\infty) \times [0,\infty))$ of the problem
	\begin{align*}
		\begin{cases}
			\frac{d}{d t} u(t,x)=\frac{d^2}{dx^2} u(t,x) + r (u(t,0) - u(t,x)) \quad &(t,x) \in (0,\infty) \times (0,\infty) \\
			\frac{d}{dx} u(t,x) = 0 \quad    &t>0, x=0\\
			u(0,x) =f(x) \in C_b[0,\infty) \quad &x \in [0,\infty)
		\end{cases}
	\end{align*}
	for $r\geq 0$, is given by
	\begin{align*}
		u(t,x)=\mathbf{E}_x[f (X_t^+)].
	\end{align*}
\end{theorem}
\noindent
The proof is postponed to Section \ref{sec:proofs}.
\section{Non-local boundary value problem}
In this section, we introduce the Non-local Boundary Value Problem (NLBVP) that corresponds to the time-reversal of Brownian motion with Poissonian resetting at zero. By NLBVPs, we refer to equations that are local within the domain but exhibit non-local operators at the boundary.  

This topic was initially introduced by Feller \cite{feller}, who first formulated an integral condition at the boundary. Subsequently, it was analyzed by It\^o and McKean \cite{ItoMckean}, who derived the corresponding probabilistic representation. More recently, the problem has been revisited in \cite{boncoldovpag2024} on the half-line and in \cite{colantoni2023non} on the real line, providing a reinterpretation through non-local operators.  

What emerges from considering these operators at the boundary is a peculiar behavior: the process, upon reaching the boundary, jumps as the last jump of a subordinator.

In all the mentioned cases, the analysis focused on the heat equation without drift. In contrast, here we consider a scenario where drift is present, which is intuitive to understand: looking backward in time at the trajectory of Brownian motion with resetting, we observe Brownian paths that tend to descend toward zero more frequently. This observation suggests that the time-reversal of Brownian motion with resetting is associated with a Brownian motion with a negative drift.

Let \( H^\Phi = \{ H_t^\Phi : t \geq 0 \} \) be a subordinator (see \cite{bertoin1999subordinators} for details). The subordinator \( H^\Phi \) can be characterized by its Laplace exponent \(\Phi\), which satisfies
\begin{align}
	\label{LapH}
	\mathbf{E}_0[\exp(-\lambda H_t^\Phi)] = \exp(-t\Phi(\lambda)), \quad \lambda \geq 0.
\end{align}
To keep the paper self-contained, we provide in the Appendix a short review of subordinators and their main properties which are used in our analysis.
We define the process $L^\Phi=\{L_t^\Phi,\ t \geq 0\}$, with $L_0^\Phi=0$,  as the inverse of $H^\Phi$, that is
\begin{align*}
	L_t^\Phi = \inf \{s > 0\,:\, H_s^\Phi >t \}, \quad t>0.
\end{align*}

Let now introduce the reflecting drifted Brownian motion we will deal with. Let $\widetilde{B}:=\{\widetilde{B}_t, t \geq 0 \}$ be the process related to the infinitesimal generator \((\widetilde{A}^+, {D}(\widetilde{A}^+) ) \), where
\begin{align}
	\label{Atilde+}
	\widetilde{A}^+f(x)=\frac{d^2}{dx^2} f(x) -2\sqrt{r} \frac{d}{dx} f(x),
\end{align}
and
\begin{align*}
	{D}(\widetilde{A}^+):=\big\{\varphi \in C_b[0,\infty) \cap C^2(0,\infty):  \, \varphi^\prime (0)=0 \big\}.
\end{align*}
We have that $\widetilde{B}$ is a reflecting Brownian motion with drift $-2\sqrt{r}$. Furthermore, $\widetilde{\gamma}:=\{\widetilde{\gamma}_t, t \geq 0 \}$ denotes the local time at zero of $\widetilde{B}$.

We define the main process of this section. Let $\widetilde{X}:=\{\widetilde{X}_t, t \geq 0 \}$ be
\begin{align}
	\label{pallino}
	\widetilde{X}_t := \widetilde{B}_t + R^\Psi_{\widetilde{\gamma}_t},
\end{align}
where $\widetilde{B}$ is the reflected Brownian motion with drift mentioned above, $R^\Psi$ is the remaining lifetime
\begin{align*}
	R^\Psi_t:=H^\Psi_{L^\Psi_t} - t := H^\Psi \circ L^\Psi_t - t
\end{align*}
of the subordinator which has the Laplace exponent $\Psi(\lambda)=\lambda + \sqrt{r} -\frac{r}{\lambda + \sqrt{r}}$.
We can easily see that the integral representation of the Bernstein function $\Psi$ is:
\begin{align*}
	\Psi(\lambda)=\lambda + \int_0^\infty (1-e^{-\lambda z}) \Pi^\Psi(dz)= \lambda + r \int_0^\infty (1-e^{-\lambda z}) e^{-\sqrt{r} z} dz.
\end{align*}
Since the subordinator \(H^\Psi\) has a L\'evy measure \(\Pi^\Psi(0, \infty) < \infty\) and a positive drift, it is a strictly increasing process (like \(t\) when there are no jumps) and makes a finite number of jumps in any finite time interval \cite[Theorem 21.3]{sato}.

The paths of the process \(\widetilde{X}\) are those of a Brownian motion with a negative drift as long as \(\widetilde{\gamma}\) remains constant, i.e., while \(\widetilde{X} > 0\). As soon as \(\widetilde{X}\) hits zero, \(\widetilde{\gamma}\) becomes active and starts growing, thereby activating the process \(H^\Psi \circ L^\Psi\): in particular, if \(H^\Psi\) has not yet jumped, \(\widetilde{X}\) reflects in \((0,\infty)\), whereas if \(H^\Psi\) has jumped, then \(\widetilde{X}\) jumps as the last jump of \(H^\Psi\) and subsequently resumes its paths. A complete description, in the absence of drift, is provided in \cite[Section 12]{ItoMckean} and \cite[Section 3.2]{boncoldovpag2024}.
\begin{figure}[h]
	\centering
	\includegraphics[width=10cm]{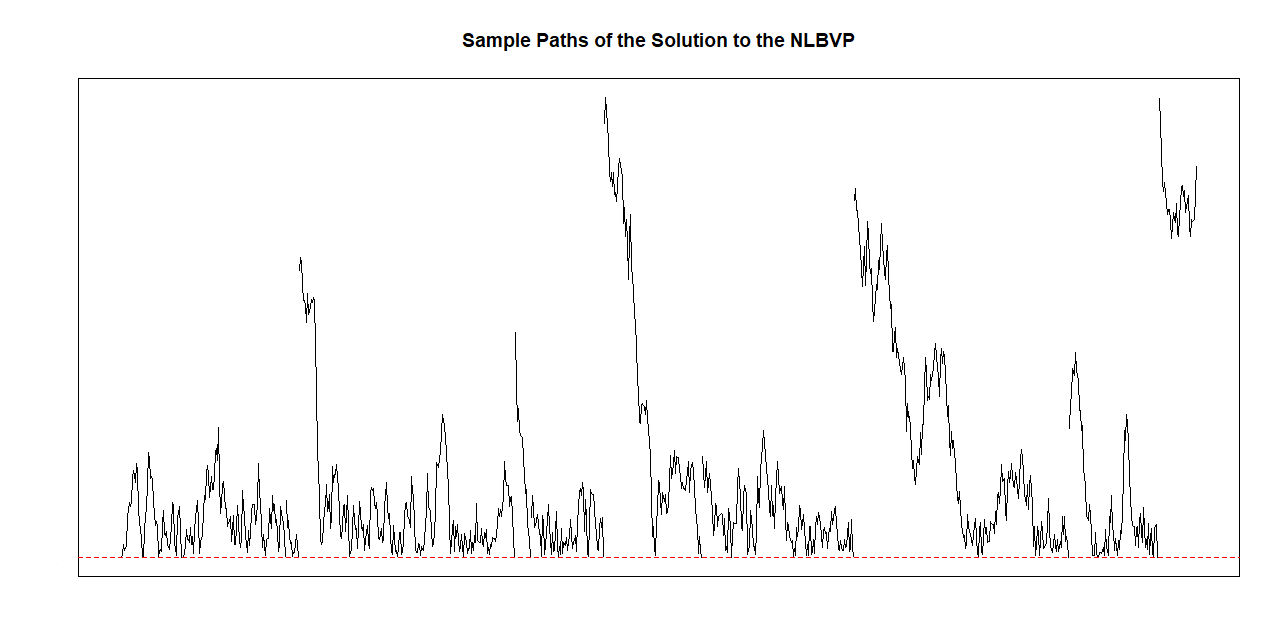} 
	\caption{A possible path for $\widetilde{X}$ with $r=2$.}
\end{figure}
Let us now clarify what we mean by a NLBVP.  
Given a continuous bounded function \( f: (0,\infty) \to (0,\infty) \), we can define the Marchaud-type operator as follows:
\begin{align*}
	\mathbf{D}^\Psi_x f(x):= \int_0^\infty (f(y+x) - f(x)) \Pi^\Psi(dy),
\end{align*}
where \(\Pi^\Psi(dy)= r \exp(-y\sqrt{r}) dy\). We use the term \textit{type} because this operator can be seen as a generalization of the classical Marchaud derivative, see \cite[Section 5.4]{kilbas}, where a more general L\'evy measure is considered, differing from the one associated with the $\alpha-$stable subordinator, with $\alpha \in (0,1)$.

We can therefore state the following theorem, which shows that \(\widetilde{X}\) is governed by the generator \( (\widetilde{A}, {D}(\widetilde{A})) \)
\begin{align*}
	\widetilde{A}f(x) = \frac{d^2}{dx^2} f(x) - 2\sqrt{r} \frac{d}{dx} f(x),
\end{align*}
and
\begin{align*}
	{D}(\widetilde{A}) := \left\{ \varphi \in C_b[0,\infty) \cap C^2(0,\infty): \, \varphi^\prime(0) + \mathbf{D}_x^\Psi \varphi(0) = 0 \right\}.
\end{align*}
\begin{theorem}
	\label{NLBVP}
	The probabilistic representation of the solution $u \in C((0,\infty) \times [0,\infty))$ of the problem
	\begin{align}
		\tag{NLBVP}
		\label{eq:NLBVP}
		\begin{cases}
			\frac{d}{d t} u(t,x)=\frac{d^2}{dx^2} u(t,x) -2\sqrt{r} \frac{d}{dx} u(t,x) \quad &(t,x) \in (0,\infty) \times (0,\infty) \\
			\frac{d}{dx} u(t,x) + \mathbf{D}^\Psi_x u(t,x)=0 \quad    &t>0, x=0\\
			u(0,x) =f(x) \in C_b[0,\infty) \quad &x \in [0,\infty)
		\end{cases}
	\end{align}
	for $r\geq 0$, is given by
	\begin{align*}
		u(t,x)=\mathbf{E}_x[f (\widetilde{X}_t)].
	\end{align*}
\end{theorem}
\noindent
Since the proof is similar to the one provided in \cite[Section 15]{ItoMckean}, we postpone it to the Appendix.
\section{Time reversal under the stationary measure}
\label{sec:stationary}
\subsection{Time reverse process}
We now turn to the central topic: understanding what happens when we reverse the time of a Brownian motion with Poissonian resetting. As we have seen, resetting causes the process to restart from zero, leading to a higher concentration of the paths near the origin. In this sense, the origin can be thought of as an attractive point.

From the perspective of time-reversed trajectories, we expect the process to tend to escape from the origin, making zero a repulsive point in this direction. This repulsion is realized through jumps away from the origin. This idea has already been anticipated in \cite{boncoldovpag2024}.

Let \( X^+ \) denote the Brownian motion with Poissonian resetting, reflected at the origin, as defined in \eqref{sde:reflectedresetting}, and \( X \) denote the free Brownian motion with resetting on \(\mathbb{R}\). From \cite[Formula (2.18)]{evansreview}, it is known that \( X \) admits a stationary distribution given by:
\[
\mu(dx) = \mu(x) dx=\frac{\sqrt{r}}{2} e^{-\sqrt{r} \vert x \vert} \, dx, \quad x \in \mathbb{R},
\]
where \( r > 0 \) is the resetting rate. The stationary state  is simply attained as $t \to \infty$ for \eqref{density}, and we can check that $A^* \mu =0$ in a weak sense, where $A^*$ is the adjoint operator of $A$, defined in \eqref{generator}. We recall that, see \cite[Proposition 9.2]{markov-book}, for the stationary distribution, we have, for any \(t \geq 0\),
\begin{align*}
	\int_{\mathbb{R}} P_t f(x) \mu(dx)= \int_{\mathbb{R}} f(x) \mu(dx),
\end{align*}
where $P_t$ is the semigroup associated to $X$ and we also know that
\begin{align*}
	\mathbf{P}_\mu(X_t \in B)=\mu(B),
\end{align*}
for every Borel set $B \subset \mathbb{R}$, and, in this sense, the stationary distribution is invariant in time.

We expect that, by searching for the limiting distribution also in this case with reflection, we will obtain the stationary distribution.
\begin{lemma}
	The stationary distribution for \(X^+\) is given by
	\[
	\mu^+(dx) = \mu^+(x) dx={\sqrt{r}} e^{-\sqrt{r} x} \, dx, \quad x >0.
	\]
\end{lemma}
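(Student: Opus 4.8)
The plan is to recover $\mu^+$ as the limiting law of $X^+_t$ as $t\to\infty$ and then check that this limit is actually invariant. Starting from the explicit density \eqref{density-reflected}, for fixed $y>0$ the term $e^{-rt}\big(g(t,x+y)+g(t,y-x)\big)$ vanishes as $t\to\infty$, since $g(t,z)=e^{-z^2/4t}/\sqrt{4\pi t}\to 0$ for every fixed $z$. For the remaining term, $s\mapsto 2re^{-rs}g(s,y)$ is integrable on $(0,\infty)$ — the factor $e^{-rs}$ controls the behaviour at infinity, and for $y>0$ the Gaussian factor vanishes as $s\downarrow 0$ — so by monotone convergence $\int_0^t 2re^{-rs}g(s,y)\,ds \to \int_0^\infty 2re^{-rs}g(s,y)\,ds$. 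Using the Laplace transform $\int_0^\infty e^{-\lambda s}g(s,y)\,ds=\tfrac12 e^{-|y|\sqrt\lambda}/\sqrt\lambda$ already recalled in the proof of the preceding theorem, evaluated at $\lambda=r$, this integral equals $2r\cdot\tfrac12\,e^{-y\sqrt r}/\sqrt r=\sqrt r\,e^{-\sqrt r y}$. Hence $\mathbf{P}_x(X_t^+\in dy)\to \sqrt r\,e^{-\sqrt r y}\,dy=:\mu^+(dy)$, independently of $x$, and since $\int_0^\infty \sqrt r\,e^{-\sqrt r y}\,dy=1$ the measure $\mu^+$ is a probability on $(0,\infty)$.

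To upgrade this from a limiting law to the stationary distribution, I would verify that $(A^+)^\ast\mu^+=0$ in the weak sense, i.e. $\int_0^\infty A^+\varphi(x)\,\mu^+(x)\,dx=0$ for every $\varphi\in{D}(A^+)$. Writing $A^+\varphi(x)=\varphi''(x)+r\big(\varphi(0)-\varphi(x)\big)$ and integrating $\int_0^\infty \varphi''(x)\sqrt r\,e^{-\sqrt r x}\,dx$ by parts twice, the boundary term at $x=0$ produced by the first integration vanishes precisely because of the Neumann condition $\varphi'(0)=0$, leaving $-r\varphi(0)+\int_0^\infty r\varphi(x)\,\sqrt r\,e^{-\sqrt r x}\,dx$; adding the two resetting contributions $\int_0^\infty r\varphi(0)\,\sqrt r\,e^{-\sqrt r x}\,dx=r\varphi(0)$ and $-\int_0^\infty r\varphi(x)\,\sqrt r\,e^{-\sqrt r x}\,dx$ gives $0$. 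Since the semigroup $P_t^+ f(x):=\mathbf{E}_x[f(X_t^+)]$ maps $D(A^+)$ into itself, $\tfrac{d}{dt}\int_0^\infty P_t^+\varphi\,d\mu^+=\int_0^\infty A^+(P_t^+\varphi)\,d\mu^+=0$, whence $\int_0^\infty P_t^+\varphi\,d\mu^+=\int_0^\infty \varphi\,d\mu^+$ for all $t$, i.e. $\mathbf{P}_{\mu^+}(X_t^+\in\cdot)=\mu^+$; uniqueness (so that this is \emph{the} stationary measure) follows from the fact that the limit computed above does not depend on $x$. As a consistency check, $\mu^+$ is exactly the pushforward of the stationary law $\mu(dx)=\tfrac{\sqrt r}{2}e^{-\sqrt r|x|}\,dx$ of the free process $X$ under the folding map $x\mapsto|x|$, matching the intuition that $X^+$ behaves like $|X|$.

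The computations involved are elementary. The only points requiring care — and the closest thing to an obstacle — are the integrability/domination needed to pass to the limit in \eqref{density-reflected} (and the mild singularity of $g(s,\cdot)$ at the origin, which is harmless for $y>0$), the mild decay assumptions on $\varphi,\varphi'$ used in the integration by parts for $\varphi\in{D}(A^+)$, and the identification of the limiting law with the unique invariant measure; all of these are handled directly by the explicit formulas already available in the excerpt.
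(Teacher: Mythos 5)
Your proof is correct, and its second half is exactly the paper's argument: the paper proves the lemma by verifying the criterion $\int_0^\infty A^+ f(x)\,\mu^+(dx)=0$ for $f\in D(A^+)$ (citing Proposition 9.2 of the cited Markov-process reference), via the same double integration by parts in which the Neumann condition $f'(0)=0$ kills the boundary term and the two resetting terms cancel. What you do differently is to first \emph{derive} the candidate $\mu^+$ as the $t\to\infty$ limit of the explicit density \eqref{density-reflected}, using the Laplace transform of $g$ at $\lambda=r$; the paper only alludes to this informally ("the stationary state is simply attained as $t\to\infty$") and takes the formula as given. Your route buys two things the paper's proof does not state: an independent derivation of the exponential form (rather than a pure verification) and, since the limit is independent of $x$ (pointwise density convergence plus Scheff\'e gives total-variation convergence), a uniqueness statement for the stationary law. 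Conversely, your upgrading of $\int_0^\infty A^+\varphi\,d\mu^+=0$ to $\mathbf{P}_{\mu^+}(X_t^+\in\cdot)=\mu^+$ through $\frac{d}{dt}\int_0^\infty P_t^+\varphi\,d\mu^+=0$ silently uses that $P_t^+$ maps $D(A^+)$ into itself; the paper sidesteps this by invoking the cited proposition as an equivalent definition of stationarity, which is the cleaner way to close that step. The integrability and decay caveats you flag (domination in the $t\to\infty$ limit, vanishing of boundary terms at infinity for $\varphi\in D(A^+)$) are at the same level of implicit assumption as in the paper's own computation, so they do not constitute a gap.
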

\begin{proof}
	From \cite[Proposition 9.2]{markov-book}, we know that an equivalent definition for the stationary distribution is
	\begin{align*}
		\int_0^\infty A^+ f(x) \mu^+(dx)=0.
	\end{align*}
	For \( f \in {D}(A^+)\), we verify that
	\begin{align*}
		\int_0^\infty A^+ f(x) &\mu^+(dx)= \int_0^\infty (f''(x) + r(f(0) - f(x) ) ) \sqrt{r} e^{-\sqrt{r} x}dx\\
		&=\sqrt{r} \big[f'(x) e^{-\sqrt{r} x}\big]^\infty_0 + r \int_0^\infty f'(x) e^{-\sqrt{r} x} + rf(0) - r \sqrt{r} \int_0^\infty f(x) e^{-\sqrt{r} x} dx\\
		&= r\big[f(x) e^{-\sqrt{r} x}\big]^\infty_0 + r \sqrt{r} \int_0^\infty f(x)e^{-\sqrt{r} x} dx + rf(0) - r \sqrt{r} \int_0^\infty f(x) e^{-\sqrt{r} x} dx\\
		&=0,
	\end{align*}
	and this provides that $\mu^+$ is the stationary distribution.
\end{proof}
The stationary distribution is crucial for the reverse process because it represents the long-term behavior of the forward dynamics. This distribution becomes the starting point for the construction of the time-reversed process, ensuring consistency with the equilibrium properties. 

We define the time reversal process 
as \(X^R_t := X^+_{T-t}\) for \(T>0\) constant and \(0 \leq t \leq T\).  In this case, instead of jumping towards zero, the process \(X^R\) jumps away from zero.

We briefly recall the connection between the time-reversed process and the duality with respect to the stationary measure, as established in \cite{nagasawa-quantum,nelson}. First, we define the \(\mu^+\) inner product
\[\langle f , g \rangle_{\mu^+}=\int_0^\infty f(x) g(x) \mu^+(x) dx. \]
\begin{proposition}
	\label{prop:stationary}
	For the time reversed process \(X^R\), we have, for almost every \(x,y>0\),
	\begin{align}
		\label{duality-density}
		\mu^+(dx) \mathbf{P}_x(X^R_t \in dy)= \mu^+(dy) \mathbf{P}_y(X^+_t \in dx).
	\end{align}
	Let \(P^+_t\) and \(P^R_t\) be the semigroup of the process \((X^+_t, t \in [0,T], \mathbf{P}_{\mu^+})\) and of its time reversal \((X^R_t , t \in [0,T], \mathbf{P}_{\mu^+})\), where
	the initial distribution is an invariant measure \(\mu^+\). Then, the following duality formula holds 
	\begin{align}
		\label{duality-semigrups}
		\langle P^+_t f , g \rangle_{\mu^+}= \langle  f , P^R_t g \rangle_{\mu^+},
	\end{align}
	for any bounded measurable \(f\) and \(g\). In addition, by defining \((A^R, {D}(A^R))\) the generator of the process \(X^R\), for any \(f \in {D}(A^+)\) and \(g \in {D}(A^R)\), we have
	\begin{align}
		\label{duality-generator}
		\langle A^+ f , g \rangle_{\mu^+}= \langle  f , A^R g \rangle_{\mu^+}.
	\end{align}
\end{proposition}
\noindent
The proof is postponed to Section \ref{sec:proofs}.

\begin{figure}[ht]
	\centering
	\begin{subfigure}[b]{0.4\textwidth}
		\centering
		\includegraphics[width=\linewidth]{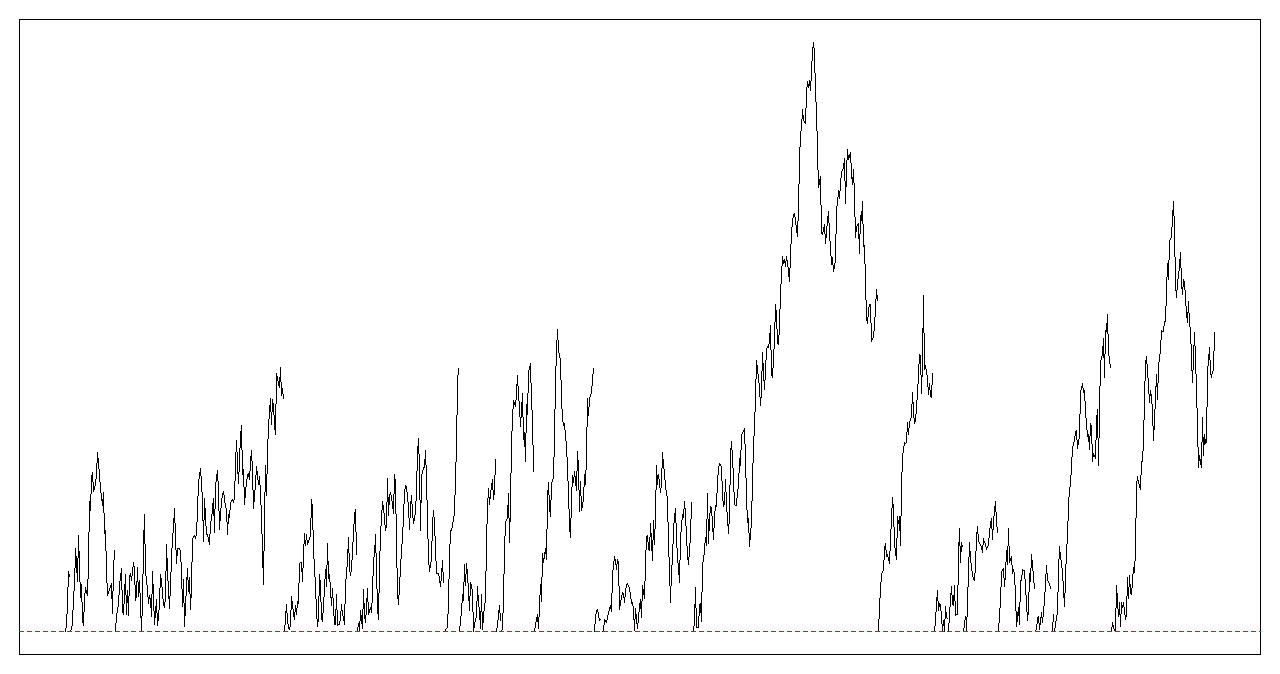}
	\end{subfigure}
	\hspace{1cm}
	\begin{subfigure}[b]{0.4\textwidth}
		\centering
		\includegraphics[width=\linewidth]{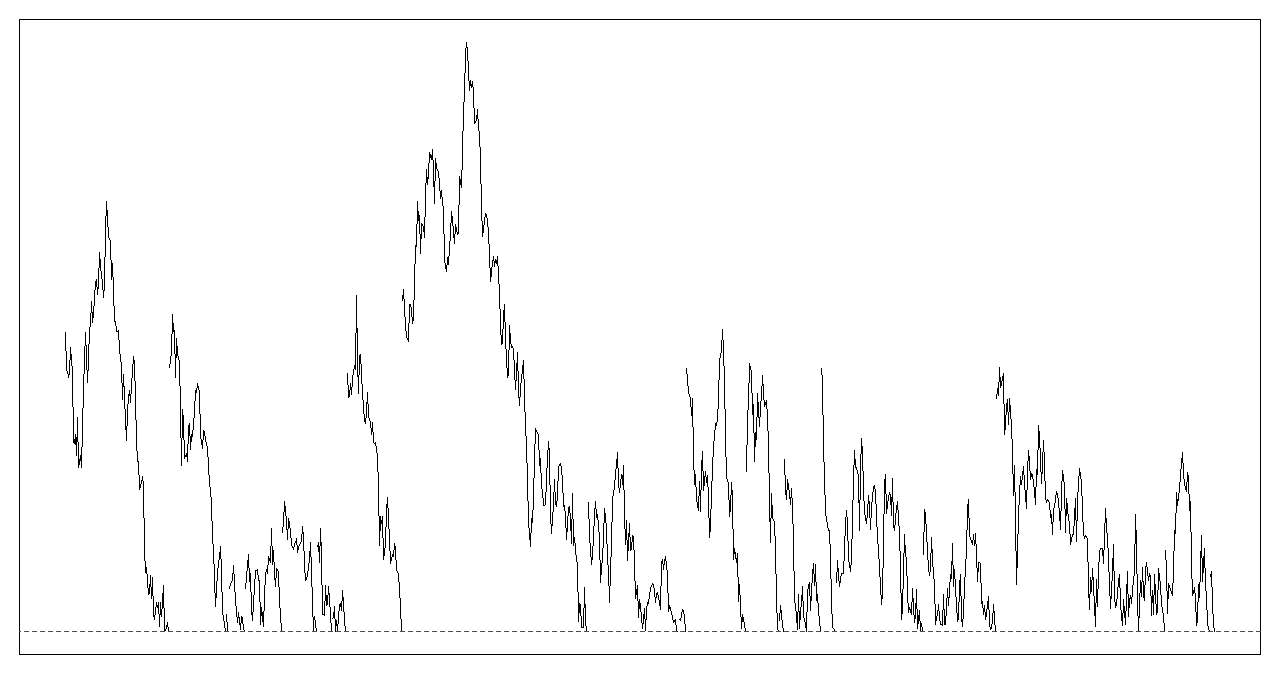}
	\end{subfigure}
	\caption{Comparison between \(X^+\), on the left, and its time reversal, on the right.}
	\label{fig:comparison}
\end{figure}
It is well known that Brownian motion with Poissonian resetting admits a non-equilibrium stationary state, meaning that the time-reversed process does not preserve the same distribution. This  also holds for stable processes with partial resetting, see \cite{ness-stable}. We now present the main result on the time-reversed process, which establishes the 
connection with the NLBVP: when the initial distribution is the stationary measure, 
	the time-reversed process $X^{R}$ is equal in law to $\widetilde{X}$, the 
	Brownian motion with drift $-2\sqrt{r}$ associated with the NLBVP stated in 
	Theorem~\ref{NLBVP}.

\begin{theorem}
	\label{thm:time-reverse}
	The generator \((\widetilde{A}, {D}(\widetilde{A}))\) satisfies the duality formula \eqref{duality-generator}. Then, \((X^R_t , t \in [0,T], \mathbf{P}_{\mu^+})\) is equal in law to \((\widetilde{X}_t , t \in [0,T], \mathbf{P}_{\mu^+})\), where the initial distribution is \(\mu^+\).
	%
\end{theorem}
\noindent
The proof is postponed to Section \ref{sec:proofs}.
\begin{remark}
	Our duality result agrees with that of \cite{nagasawa-reflecting}, with some differences: in our case, we consider a diffusion with jumps, which consequently affects the boundary conditions, rather than a diffusion with drift and reflecting barriers. A more general result, though without boundary conditions, can be found in \cite{nelson}.
\end{remark}
\begin{remark}
	Our result can be viewed as a specific case of \cite[Theorem 2.1]{reverse-jumps} in an unbounded domain,
	where both the generator and the stationary measure are explicitly known. Later, we will detail what the authors refer to as "accessible" and "inaccessible" boundaries in our context. From a physical perspective, we have found that the processes \(X^+\) and \(\widetilde{X}\) share the same equilibrium, so we can reverse one to obtain the other.
\end{remark}
\begin{remark}
	Despite considering different generators, our jump measure coincides with the one analyzed in \cite{conforti22}, where Markov processes with only drift and jumps (without a diffusive component) are studied. The reverse jumping measure we derived satisfies the equation
	\begin{align*}
		\mu^+(dx) \widetilde{J}(x, dy) = \mu^+(dy)J(dx),
	\end{align*}
	where $\widetilde{J}(x, dy)$ is the jumping measure for the process $\widetilde{X}$ and depends on the position $x$. From the definition of $J(dx)=r \delta_0(dx)$, see \eqref{jump-measure}, we have
	\begin{align*}
		\widetilde{J}(x=0, dy)= r e^{-\sqrt{r} y} dy,
	\end{align*}
	this means that from the position \( x = 0 \), the process $\widetilde{X}$ jumps according to the distribution $\widetilde{J}$, which coincides with the one that we have studied as boundary condition in Theorem \ref{thm:time-reverse}.
\end{remark}
\begin{remark}
	For the time reverse, we comprehensively exploit the duality of the generators, which is a direct consequence of having chosen \(\mu^+\) as the initial distribution. Generally, these results can be extended, for transient processes, if the invariant measure is an excessive measure, of the form
	\begin{align*}
		m(B)=\int_0^\infty \int_0^\infty \mathbf{P}_x(X_s \in B) \nu(dx) \, ds,
	\end{align*}
	for every Borel set $B \subset \mathbb{R}$. In these cases, it is possible to study the time-reversed process by taking \(\nu\) as the initial distribution and reversing the paths starting from a cooptional time. For further details, see \cite{nasawa-markov} and \cite[Theorem 4.5, Chapter VII]{revuz-yor}.
\end{remark}

	\begin{remark}
		It is known that standard Brownian motion with Poissonian resetting admits a 
		stationary measure \cite[Formula (2.18)]{evansreview}, namely
		\[
		\mu(dx) = \mu(x)\,dx = \frac{\sqrt{r}}{2} e^{-\sqrt{r}\,|x|}\,dx, 
		\qquad x \in \mathbb{R}.
		\]
		Therefore, the duality techniques, with respect to the stationary measure, developed in Theorem \ref{thm:time-reverse} 
		can also be applied in this setting. The main difference is that the origin is 
		not a boundary point but an interior point, so the time-reversed dynamics is 
		expected to satisfy a transmission condition at zero, with jumps occurring on 
		both sides of the origin.
\end{remark}
\section{Time reversal between two consecutive resetting points}
\label{sec:paths}	
In this section, we study the time-reversed process between two consecutive resetting points. In particular, we initially focus on the generator before the first resetting point. Then, we compute the distribution of the jumps and the local time at zero to show that we can concatenate individual path segments and use the Markov property to extend our result to two consecutive resetting points. Throughout this section, we will no longer assume that the initial distribution is \(\mu^+\), but instead, we will consider \(X^+_0 = 0\). In truth, this assumption is not restrictive; however, it allows the generalization between two consecutive resetting points to emerge naturally.

First, we examine how our results fit into the literature on time-reversed processes. Let \(\{Y_t, 0 \leq t \leq 1\}\) be a diffusion on \(\mathbb{R}\), governed by the stochastic differential equation  
\[
dY_t = b(t, Y_t) dt + \sigma(t, Y_t) dW_t,
\]  
where \(\{W_t, 0 \leq t \leq 1\}\) is a standard Brownian motion on \(\mathbb{R}\). We assume the usual conditions that \( b \) and \( \sigma \) are globally Lipschitz and satisfy the linear growth condition. We observe that, for \(0 \leq t \leq 1\), the Brownian motion \(B_t\), introduced in Section \ref{sec:bmrestting}, is  \(B_t \stackrel{d}{=}\sqrt{2} W_t\). We define the time-reversed process \({Y}_t^R := Y_{1-t}\), for \(0 \leq t \leq 1\). It is clear that \({Y}^R\) is a Markov process, but we cannot conclude a priori that it is a diffusion.  

In \cite{reverse-diffusion1}, the authors provide sufficient conditions for \({Y}^R\) to be a diffusion. Later, under the assumption that \(Y\) has a density for every \(t\), it is shown in \cite{reverse-diffusion2} that this condition is both necessary and sufficient. In particular, from these works, we know that the infinitesimal generator \((\overline{L}_t, {D}(\overline{L_t}))\) of \({Y}_t^R\) is  
\begin{align}
	\label{Lsegnato}
	\overline{L}_t f (x)= &\frac{1}{2} \sigma^2(1-t,x) \frac{d^2}{dx^2} f(x) \,+\notag \\
	&+\left(\frac{1}{p(1-t,x)} \frac{d}{dx} [\sigma^2(1-t,x) p(1-t,x)] - b(1-t,x)\right) \frac{d}{dx} f(x),
\end{align}
and \({D}(\overline{L}_t)=\big\{\varphi \in C^2(\mathbb{R}) \big\}\) for all \(t \in [0,1]\), where \(p\) is the density of \(Y\). In \cite{reverse-reflected}, the author studies the time-reversed process for reflected Brownian motion \mbox{$\{W^+_t, 0 \leq t \leq 1\}$}
\begin{align*}
	dW^+_t= dW_t + \frac{1}{2} dl_t(W^+)
\end{align*}
where \(l(W^+)\) is the local time at zero of \(W^+\), and, by using \cite[Lemma 2.1]{reverse-reflected}, we write the generator of \(\{W^+_{1-t}, 0 \leq t \leq 1\}\) as \((\overline{L}^+_t, {D}(\overline{L}^+_t))\)
\begin{align}
	\label{Lsegnato+}
	\overline{L}_t^+ f(x) = \frac{1}{2} \frac{d^2}{dx^2} f(x) + \left(\frac{1}{p^+(1-t,x)} \frac{d}{dx} p^+(1-t,x)\right) \frac{d}{dx} f(x)
\end{align}
and
\[
{D}(\overline{L}_t^+)=\big\{\varphi \in C_b[0,\infty) \cap C^2(0,\infty):  \, \varphi^\prime (0)=0 \big\},
\]
where \(p^+\) is the density of \(W^+\). As before, our reflecting Brownian \(B^+_t \stackrel{d}{=}\sqrt{2} W^+_t\), for \(0 \leq t \leq 1\).

In all these results, the time reversal is considered over a fixed time interval, namely \([0,1]\). Our goal, however, is to study the problem over a random time interval, which prevents us from directly applying the above theory. We know that the jumps of \(X^+\) are determined by those of \(N\), which is a subordinator with a finite L\'evy measure. Thus, over the interval \([0,t]\), we can enumerate the jumping times \(T_1, T_2, \dots, T_n\), such that for each \(i\), \(T_{i+1} - T_i \sim \text{Exp}(r)\).  Between two consecutive reset times \(T_i\) and \(T_{i+1}\), \(X^+\) evolves as a reflecting Brownian motion started at zero, denoted by \(B^+\). Let us begin by understanding what occurs prior to the first reset.
\begin{theorem}
	\label{thm:paths}
	Let \(T_1\) be the first jumping time of the Poisson process \(N\) and \(B^+\) be the reflected Brownian motion. Then, for \(B^+_0=0\), we have that the time reverse process \(B^+_{T_1 - t}\), for \(0 \leq t \leq T_1\), is governed by the generator \((\widetilde{A}^+, {D}(\widetilde{A}^+))\), introduced in \eqref{Atilde+}.
	So, \(B^+_{T_1-t}\) has the same generator of \(\widetilde{B}_t\) for \(0 \leq t \leq T_1\).
\end{theorem}
\noindent
The proof is postponed to Section \ref{sec:proofs}.

So far, we have obtained that, since before \( T_1 \) the process \( X^+ \) behaves as \( B^+ \), reversing \( X^+_{T_1 - t} \) yields the process \( \widetilde{B}_t \), for \(0\leq t < T_1\). Furthermore, by \cite[Formula 1.0.4, page 333]{BorodinSalminen}, the position of a reflecting Brownian motion \(B^+\) started at zero and evaluated at an exponential time \(T_1\) is distributed as  
\[
p(dy) = \sqrt{r} e^{-\sqrt{r} y} \, dy.
\]
This describes the distribution of the position reached by \(X^+\) immediately before the first resetting. At time \( T_1 \), we know that the process \( X^+ \) is reset to zero, i.e., \( X^+_{T_1} = 0 \). However, since it is a Markov process, we can restart the paths from this point without considering the past.  

Thus, we interpret the interval \([T_1, T_2)\) as analogous to \([0, T_1)\), and for \(t \in [T_1, T_2)\), \({X}^+_t\) behaves as a reflecting Brownian motion \(B^+_t\) starting from zero and evolving until an exponential time \( \text{Exp}(r) \), since \( T_2 - T_1 \) represents the interarrival time between two consecutive jumps of the Poisson process \( N \), meaning that \( T_2 - T_1 \sim \text{Exp}(r) \). 

Therefore, in this interval as well, we obtain that the time-reversed process is \( \widetilde{B} \) and that the position reached at \( T_2 \) is distributed according to \( p(dy) \).  

Additionally, we can derive another property of \( X^+ \). Let \(\gamma^+\) denote the local time of a reflecting Brownian motion \(B^+\) with \(B^+_0=0\). Then, the local time accumulated at zero by \(X^+\) between two resetting points \(T_1\) and \(T_2\), denoted by \(\gamma^+_{(T_{2} - T_1)}\), follows an \(\text{Exp}(\sqrt{r})\) distribution. In fact, we obtain  
\begin{align*}
	\mathbf{P}_0 (\gamma^+_{T_2-T_1} \in dw) &= \int_0^\infty \mathbf{P}_0 (\gamma^+_{s} \in dw) \mathbf{P}(T_2-T_1 \in ds)  \\
	&= r \int_0^\infty e^{-rs} \mathbf{P}_0 (\gamma^+_{s} \in dw) \\
	&= \sqrt{r} e^{-w \sqrt{r}} dw.
\end{align*}  

We can then recursively apply the Markov property at each resetting point \( T_i \). Since, by definition, \( X^+_t \) behaves as \( B^+_t \) between two consecutive resetting points, that is for  \(t \in [T_i, T_{i+1})\), with \( i = 0,1, \dots \) and \( T_0 = 0 \), where \( B^+_t \) starts from zero and \( T_{i+1} - T_i \sim \text{Exp}(r) \) as the consecutive interarrival times of the Poisson process \(N\), we consistently obtain the following three properties:  

1. The time-reversed process \( X^+_{T_{i+1} - t} \stackrel{d}{=} \widetilde{B}_t \) for \( t \in [T_i, T_{i+1}) \).  

2. The position reached by \( X^+ \) at \( T_{i+1} \), starting from zero at time \( T_i \), follows the distribution \( p(dy) \sim \text{Exp}(\sqrt{r}) \).  

3. The local time at zero, for \( t \in [T_i, T_{i+1}) \), is distributed as \( \text{Exp}(\sqrt{r}) \).

\noindent
Let us now see how these three points are consistent with the process \( \widetilde{X} \). First, \(\widetilde{X}\), outside from the jumping point, is \(\widetilde{B}\), as we see from the definition \eqref{pallino}. But, in zero, \(\widetilde{X}\) is governed by the subordinator \(H^\Psi\) with L\'evy measure \(\Pi^\Psi(dz)= r e^{-z\sqrt{r}} dz\): it either jumps according to the last jump of the subordinator or reflects if no jump has occurred yet.

The i.i.d. jump times \(e_1, e_2, \dots\) of the subordinator are characterized by \(\mathbf{P}(e_1 > t)=\exp(-t \, \Pi^\Psi(0,\infty))= \exp(-t \sqrt{r})\), so they follow an \(\text{Exp}(\sqrt{r})\) distribution. Since the process \(\widetilde{X}\) at zero evolves as the subordinator \(H^\Psi\), and the local time accumulated at zero by \(X^+\) between resetting points is also \(\text{Exp}(\sqrt{r})\) distributed, we conclude that this local time is the time needed for a jump to occur in the time-reversed process.
Moreover, as shown in \cite[Section 11]{ItoMckean}, the distribution of the i.i.d. jumps \(l_1, l_2, \dots\) of the subordinator is given by  
\[
\mathbf{P}(l_i \in dy) = \frac{1}{\Pi^\Psi(0,\infty)} \Pi^\Psi(dy)= \sqrt{r} e^{-\sqrt{r} y} \, dy,
\]
which coincides with the distribution of the position of a reflecting Brownian motion evaluated at exponential times.

Therefore, by considering the paths piece by piece between resetting points, the jumps of \(\widetilde{X}\) follow the same law as the position of \(X^+\) immediately before a reset. This ensures that \(\widetilde{X}\) "jumps to the correct position," consistently with the time-reversed dynamics of a reflecting Brownian motion with Poissonian resetting.

In summary, the process \( \widetilde{X} \), outside the jumping points, behaves like \( \widetilde{B} \), which is the time-reversed process of \( X^+ \) in the intervals \([T_i, T_{i+1})\). Furthermore, the time that \( \widetilde{X} \) spends at zero before jumping is distributed in the same way as the local time of \( X^+_t \) for \( t \in [T_i, T_{i+1}) \), and the position of the jumps is distributed in the same way as the jump size of the resetting process that occurs at \( X^+_{T_i} \). In this sense, we note the connection between the time-reversed paths of \( X^+ \) and the paths of \( \widetilde{X} \) in the intervals \([T_i, T_{i+1})\), which we can concatenate piece by piece thanks to the Markov property.

We conclude by observing that the boundary behavior is also reversed. While \( X^+ \) exhibits a tendency to accumulate near the boundary due to resetting, its time-reversed counterpart \( \widetilde{X} \) instead attempts to escape upon reaching it. The negative drift ensures that \( \widetilde{X} \) also tends towards zero before jumping away.
\begin{figure}[h]
	\centering
	\includegraphics[width=4.5cm]{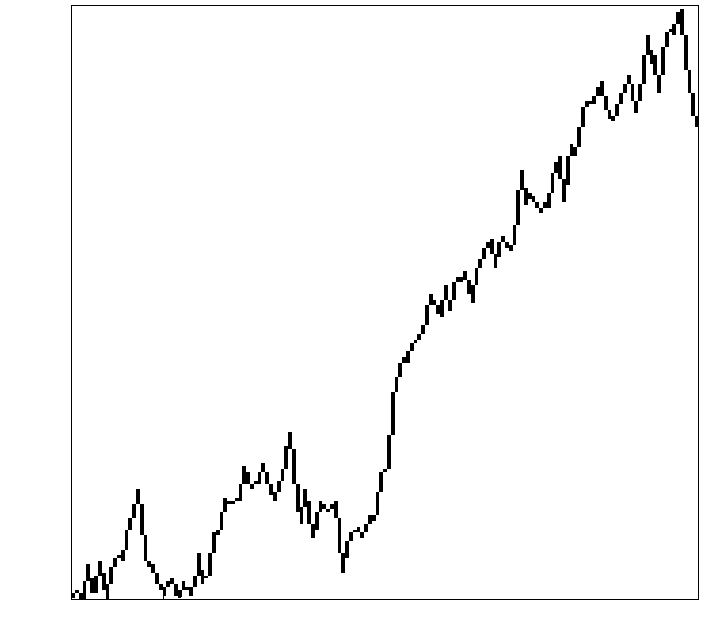} 
	\caption{A possible path for $X^+$ between two resetting times.}
	\label{fig:escursione}
\end{figure}
\section{Inverse of local time at the origin}
\label{sec:local}
In this section, we focus on the local time at the origin and we show that the one of the Brownian motion with resetting \(X^+\) and the one of the reversed process $\widetilde{X}$ are equal in law. This ensures that, despite the opposite boundary behavior, where \(X^+\) tends to concentrate more near zero, while \(\widetilde{X}\) tends to jump away from it, the time they spend at the boundary is the same.
For the stochastic resetting, this subject has also been explored in \cite[Section 3]{singh2022}, with a focus up to the first-passage time. The general topic of local time for diffusions with stochastic resetting has already been addressed in \cite{localtime}, where explicit moments and densities can also be found. We will focus more on the relation with subordinators due to the Markovian nature of the process.

The connection between subordinators, their inverses, and local times is well known: given a Markov process, its local time (at a regular point) is the inverse of a subordinator \cite[Theorem 2.3]{BlumenthalGetoor}. Let \( L^\Phi \) be the local time of a Markov process at zero. We know that the symbol characterizing the subordinator (which is the inverse of the local time) is given by \cite[Proposition 4, Section V.I]{bertoinlevy}
\begin{align}
	\label{Phi:Markov-LocalTime}
	\Phi(\lambda) = \frac{1}{u^\lambda(0)},
\end{align}
where $u^\lambda(\cdot)$ is the density of the resolvent associated to the Markov process. Let us now see this connection in the case of a process with resetting.
\begin{theorem}
	\label{tm:inverselocaltime}
	The inverse of the local time at zero of $X^+$, started at zero, is a subordinator with Laplace exponent
	\begin{align}
		\label{Laplace-exponent}
		\Phi(\lambda)=\frac{\lambda}{\sqrt{\lambda + r}}, \quad \lambda >0.
	\end{align}
\end{theorem}
\begin{proof}
	That the inverse of the local time is a subordinator is guaranteed by the results stated previously, since resetting preserves the Markovian property of the process \(X^+\). We check its Laplace exponent.
	By applying the Laplace transform  at \eqref{density-reflected}, we have, for $\lambda>0$,
	\begin{align*}
		u^\lambda(0)&= \int_0^\infty e^{-\lambda t } \left(e^{-rt} [g(t,0) + g(t, 0)] + \int_{0}^{t} 2r e^{-rs} g(s,0) \, ds\right) dt\\
		&=\frac{1}{2}\frac{1}{\sqrt{\lambda + r}}+ \frac{1}{2}\frac{1}{\sqrt{\lambda + r}} + \frac{r}{\lambda}\frac{1}{\sqrt{\lambda + r}} \\
		&=\frac{1}{\sqrt{\lambda + r}}  \left(1+\frac{r}{\lambda}\right)\\
		&=\frac{\sqrt{\lambda+r}}{\lambda}.
	\end{align*}
	From \eqref{Phi:Markov-LocalTime}, we have our claim.
\end{proof}
We already know that the function \(\Phi\) found is the Laplace exponent of a subordinator (\cite[Theorem 5.2]{schilling2012bernstein}). We also add that since \(\sqrt{\lambda + r}\) is a complete Bernstein function (i.e., a Bernstein function for which the L\'evy measure \(\Pi^\Phi\) has a completely monotone density), it follows, from \cite[Proposition 7.1]{schilling2012bernstein} that \(\Phi(\lambda) = \frac{\lambda}{\sqrt{\lambda + r}}\) is also a complete Bernstein function, specifically its L\'evy measure is (\cite[Section 16.2, Case 5]{schilling2012bernstein})
\begin{align*}
	\Pi^\Phi(dz)= \frac{e^{-rz} (2rz+1)}{2 \sqrt{\pi} z^{3/2}}\, dz.
\end{align*}
Moreover, for \( r = 0 \), we obtain simply that $\Phi(\lambda)=\sqrt{\lambda}$ and $\Pi^\Phi(dz)=\frac{1}{2 \sqrt{\pi} z^{3/2}}$, which coincide with the well-known properties of reflected Brownian motion.

We now move on the local time at zero of the process $\widetilde{X}$, but first we need this preliminary result.
\begin{theorem}
	\label{tm:local-time-pallino}
	The local time at zero of the process $\widetilde{X}$, started at zero, is given by $L^\Psi \circ \widetilde{\gamma}$.
\end{theorem}
\noindent
Since this result is a generalization of \cite[Section 14]{ItoMckean}, we postpone the proof to the Appendix.

Let us now investigate how these two local times are related.
\begin{theorem}
	\label{tm:same-local-times}
	Also the inverse of the local time at zero of $\widetilde{X}$, started at zero, is a subordinator with Laplace exponent
	\begin{align*}
		\Phi(\lambda)=\frac{\lambda}{\sqrt{\lambda + r}}, \quad \lambda >0.
	\end{align*}
	When the processes \(X^+\) and \(\widetilde{X}\) start at zero, they share, in distribution, the same local time at zero.
\end{theorem}
\begin{proof}
	By integrating \eqref{drift:joint} in $dy$, we obtain, for $\lambda>0$
	\begin{align*}
		\int_0^\infty e^{-\lambda t} \mathbf{P}_0(\widetilde{\gamma}_t \in du)dt= \frac{1}{\sqrt{\lambda + r} + \sqrt{r}} e^{-w (\sqrt{\lambda + r} - \sqrt{r}) }=\frac{\sqrt{\lambda + r} - \sqrt{r}}{\lambda} e^{-w (\sqrt{\lambda + r} - \sqrt{r}) },
	\end{align*}
	where in the last equality we recognize, by using \eqref{symbol:L}, that $\widetilde{\gamma}$ has the law of the inverse of a subordinator with Laplace exponent $\sqrt{\lambda + r} - \sqrt{r}$. From Theorem \ref{tm:local-time-pallino} we know that the local time of $\widetilde{X}$ is given by $L^\Psi \circ \widetilde{\gamma}$, which are independent, then, for $\lambda>0$,
	\begin{align*}
		\int_0^\infty e^{-\lambda t} \mathbf{P}_0 (L^\Psi \circ \widetilde{\gamma} _ t \in dx) dt &= \int_0^\infty e^{-\lambda t} \int_0^\infty \mathbf{P}_0 (L^\Psi_s \in dx) \mathbf{P}_0(\widetilde{\gamma}_t \in ds) \, dt\\
		&= \int_0^\infty \frac{\sqrt{\lambda + r} - \sqrt{r}}{\lambda} e^{-s (\sqrt{\lambda + r} - \sqrt{r})} \mathbf{P}_0 (L^\Psi_s \in dx) ds\\
		&= \frac{\sqrt{\lambda + r} - \sqrt{r}}{\lambda}  \frac{\Psi(\sqrt{\lambda + r} - \sqrt{r})}{\sqrt{\lambda + r} - \sqrt{r}} e^{-x \Psi(\sqrt{\lambda + r} - \sqrt{r})},
	\end{align*}
	where we have used \eqref{symbol:L}.  We simply observe that
	\begin{align*}
		\Psi(\sqrt{\lambda + r} - \sqrt{r})= \sqrt{\lambda + r} - \sqrt{r} + \sqrt{r} - \frac{r}{\sqrt{\lambda + r} - \sqrt{r} + \sqrt{r}} = \sqrt{\lambda + r} - \frac{r}{\sqrt{\lambda + r}}= \frac{\lambda}{\sqrt{\lambda + r}}.
	\end{align*}
	Then, we conclude from
	\begin{align*}
		\int_0^\infty e^{-\lambda t} \mathbf{P}_0 (L^\Psi \circ \widetilde{\gamma} _ t \in dx) dt&=  \frac{\lambda}{\sqrt{\lambda + r}} \frac{1}{\lambda} e^{-x \frac{\lambda}{\sqrt{\lambda + r}}}\\
		&=  \frac{\Phi(\lambda)}{\lambda} e^{-x\Phi(\lambda)},
	\end{align*}
	that $L^\Psi \circ \widetilde{\gamma}$, which is the local time at zero of $\widetilde{X}$, is the inverse of a subordinator with Laplace exponent $\Phi(\lambda)= \frac{\lambda}{\sqrt{\lambda + r}}$, which holds also for $X^+$.
\end{proof}
\section{Trace process}
\label{sec:trace}
In this section, we focus on trace processes and, in particular, examine how the inverse of the local times for Brownian motion with resetting and for the reverse process prove to be useful. We begin with the case of simple Brownian motion.

Let us introduce the positive half-space $H=\mathbb{R}\times [0,\infty)$ with boundary $\partial H = \mathbb{R} \times \{0\}$. As mentioned previously, let $B$ be a Brownian motion on $\mathbb{R}$ and $B^+$ an independent reflected Brownian motion on $[0,\infty)$. Then, we have that $(B, B^+)$ is the Brownian motion on $H$. We denote by $\gamma^+$ the local time of $B^+$ at zero and $\gamma^{-1}$ its right inverse. From our construction, we know that $\gamma^+$ coincides with $L^\Phi$ when $r=0$ and $\gamma^{-1}$ is a $1/2-$stable subordinator with symbol $\Phi(\lambda)=\sqrt{\lambda}$. The trace left by $(B, B^+)$ on $\mathbb{R} \times \{0\}$ is $T:=B \circ \gamma^{-1}$ and it is a Cauchy process, since it is a Brownian motion subordinated with an independent $1/2-$stable subordinator. Let us explore the connection with partial differential equation theory.

First, we consider
\begin{align*}
	\begin{cases}
		\Delta u(x,y)=0 \quad & \text{in } H\\
		u(x,0) =f(x)  \quad &\text{on } \partial H
	\end{cases}
\end{align*}
for $f$ smooth, for example for $f \in \mathcal{S}(\mathbb{R})$ in the Schwartz space, and $\Delta$ the Laplacian in two dimensions. We define the Dirichlet-to-Neumann operator $K$
\begin{align}
	\label{D-to-N}
	K f(x)= \partial_y u(x,y) \big\vert_{y=0},
\end{align}
which is the generator of the trace process $T$. By applying the Fourier transform $\mathcal{F}_x$ in the first variable at
\begin{align*}
	\Delta u(x,y)=0 \quad & \text{in } H,
\end{align*}
we obtain, for $\xi \in \mathbb{R}$ and \(\mathcal{F}_x u (\xi, y)= \int_\mathbb{R} e^{-i x \xi} u(x,y) dx\),
\begin{align*}
	- \xi^2 \mathcal{F}_x u(\xi, y) + + \frac{\partial^2}{\partial y^2} \mathcal{F}_x u(\xi,y)=0.
\end{align*}
Since we are looking for a bounded solution, we have 
\begin{align*}
	\mathcal{F}_x u (\xi, y)= e^{-\vert \xi \vert y} \mathcal{F}_x u(\xi, 0)= e^{-\vert \xi \vert y} \widehat{f}(\xi),
\end{align*}
where $\widehat{f}$ is the Fourier transform of $f$. Then, the Fourier transform of the Dirichlet-to-Neumann \eqref{D-to-N} is
\begin{align*}
	\mathcal{F}_x K f (\xi)= - \vert \xi \vert \widehat{f}(\xi),
\end{align*}
and from this, we conclude $K= -\sqrt{-\Delta}$, in the Phillips' sense, such that $K^2=- \Delta$ which, for the one dimensional case, is $K = - \sqrt{-\partial^2_{xx}}$.
This aligns with our expectations: Indeed, we know that, for the one-dimensional Cauchy process $B \circ \gamma^{-1}_t$, the Fourier symbol is $e^{-t \vert \xi \vert}$.

Let us now analyze how the traces left on $\partial H$ change when resetting is introduced. Given the importance of the local times at zero, what we predict is that the traces left by $X^+$ and $\widetilde{X}$ remain the same, as both share the same local time at zero. 

Consider \((B_1, X^+)\) and \((B_2, \widetilde{X})\) on \( H \cup \partial H \), where \(B_1,B_2\) are two independent Brownian motions on \(\mathbb{R}\), independent also of $B^+$, with \( X^+_0 = \widetilde{X}_0 = 0 \). We define \(\ell^+\) as the local time of \( X^+ \) at zero, and \(\widetilde{\ell}\) as the local time of \( \widetilde{X} \) at zero. 

Then, similarly to what we have seen for reflecting Brownian motion, we define the trace processes on \(\partial H\) as follows:
\[
T_1 := B_1 \circ (\ell^+)^{-1} \quad \text{and} \quad T_2 := B_2 \circ (\widetilde{\ell})^{-1},
\]
where the notation \(-1\) denotes the inverse process. 

The following theorem holds.

\begin{theorem}
	\label{thm:trace}
	The trace processes \(T_1\) and \(T_2\) on \(\partial H\) have the same law.
	In particular, given the problems for $f \in \mathcal{S}(\mathbb{R})$
	\begin{align}
		\tag{P1}
		\label{P1}
		\begin{cases}
			\Delta u_1(x,y) + r (u_1(x,0) - u_1(x,y))=0 \quad & \text{in } H\\
			u_1(x,0) =f(x)  \quad &\text{on } \partial H
		\end{cases}
	\end{align}
	with the Dirichlet-to-Neumann operator $K_1 f(x):= \partial_y u_1(x,y) \vert_{y=0}$, and
	\begin{align}
		\tag{P2}
		\label{P2}
		\begin{cases}
			\Delta u_2(x,y) -2 \sqrt{r} \frac{\partial}{\partial y} u_2(x,y)=0 \quad & \text{in } H\\
			u_2(x,0) =f(x)  \quad &\text{on } \partial H
		\end{cases}
	\end{align}
	with the boundary operator $K_2 f(x):= \partial_y u_2(x,y)\vert_{y=0} + \mathbf{D}_y^\Psi u_2(x,y)\vert_{y=0}  $, we have that pointwise
	\begin{align*}
		K_1 f(x) = K_2 f(x).
	\end{align*}
\end{theorem}
\noindent
The proof is postponed to Section \ref{sec:proofs}.
\begin{figure}[h!]
	\centering
	\includegraphics[width=13.5cm, height=9cm]{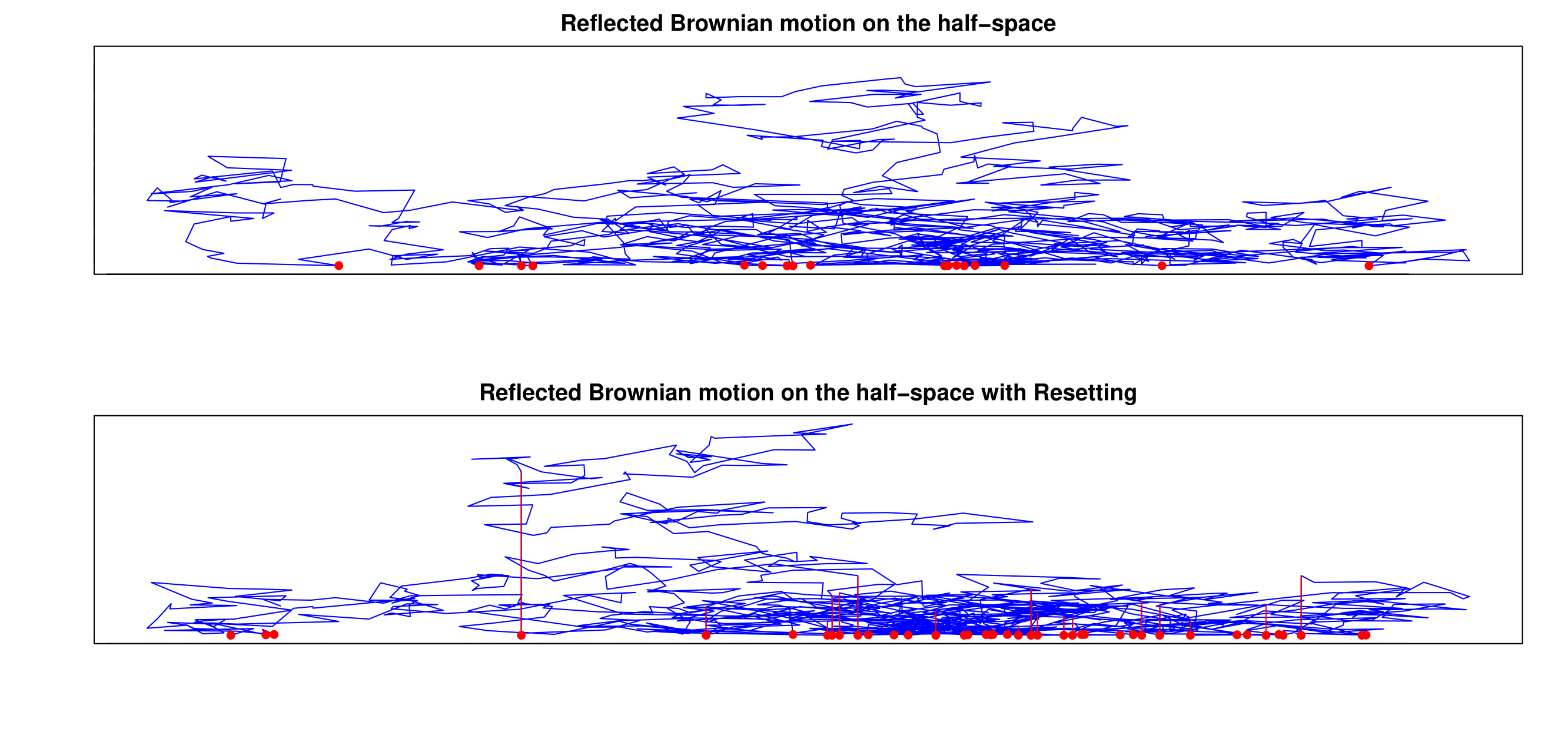} 
	\caption{A possible path for the trace on $\mathbb{R} \times \{0\}$ for the reflected Brownian motion $(B, B^+)$ (above) and for the reflected Brownian motion with stochastic resetting $(B, X^+)$ with $r=2$ (below).}
	\label{fig:tracce}
\end{figure}
\begin{remark}
	It is interesting to note that, as in the case studied in \cite{kwasnicki}, the trace process here is also a L\'evy process with completely monotone jumps, for which the generator has an explicit representation.
\end{remark}

\section{Discussion and Conclusion}
In this work we have analyzed the time reversal of reflecting Brownian motion with Poissonian resetting. Our main results combine both a rigorous mathematical characterization and a physical interpretation.
	\\
	\textbf{Mathematical summary.}\\
	(1) \textit{Time reversal and stationary measure (Section \ref{sec:stationary})}: Starting from the stationary distribution of the process, which corresponds to a nonequilibrium steady state (NESS), we proved that the time-reversed dynamics is a Brownian motion with negative drift, subject to non-local boundary conditions (NLBVPs). This change of law under time reversal reflects the fact that the stationary state is not a thermal equilibrium.\\
	(2) \textit{Time reversal between two consecutive resets (Section \ref{sec:paths})}: We also studied the paths between two resetting events, without assuming stationarity. In this case, each continuous part of the trajectory can be analyzed as a classical reflecting diffusion. By the Markov property, these pieces can be concatenated to reconstruct the global reversed process. In this way we recover the same time-reversed dynamics described in point (1): a Brownian motion with negative drift, subject to NLBVPs. This approach is new, because it does not rely on the existence of a stationary measure and could be extended to non-Poissonian resetting protocols.\\
	(3) \textit{Boundary behavior (Sections \ref{sec:local} and \ref{sec:trace})}: Although the role of the boundary is reversed (attractive in the forward process, repulsive in the reversed one), we showed that the local time at the origin has the same law in both processes. Moreover, the traces left on the positive half-space coincide in distribution, a fact also confirmed by solving the associated elliptic problem with non-local boundary conditions.
	\\
	\textbf{Physical interpretation.}\\
	Time-reversed trajectories are of interest in statistical physics within the framework of fluctuation theorems, which compare forward and backward path probabilities and are used to investigate entropy production at the trajectory level \cite{mori2023entropy} and thermodynamic costs related with irreversibility \cite{olsen2024thermodynamic}. Our findings fit within this framework.
	\\
	It is known that reset processes may result into unidirectional processes, which means they have no time-reversed equivalent. This is the case also of the simple Brownian motion with Poissonian resetting that reaches indeed a non-equilibrium steady state (NESS). Here, we provide for our process how a detailed balance with respect to the stationary measure can be restored.
	\\
	In fact, given a time-series corresponding to an experimental trajectory that undergoes forward-in-time according to a reflecting Brownian motion with Poissonian resetting, we show that such a trajectory is statistically equivalent to a time-series corresponding to a trajectory of a Brownian motion with a negative drift combined with random jumps at the boundary. Thus, this last process identifies the precise backward stochastic dynamics (including the non-local boundary terms) for the reflected and reset Brownian motion.
	\\
	Hence, by making explicit the forward and backward path-laws we have a recipe for how an experiment could be run in either temporal direction: knowing the backward dynamics allows one to reconstruct which dynamics would have to be implemented to reproduce backward-time paths.
	\\
	In our case, in the time-reversed distribution a negative drift term appears, a common characteristic of reflected diffusions \cite{reverse-reflected}, and the non-local boundary conditions correspond, by time inversion, to reflections and resetting.\\
	\textbf{Outlook.}\\
	This work combines a rigorous mathematical framework for reflecting Brownian motion with Poissonian resetting with the physical interpretations discussed above. The pathwise approach developed in Section \ref{sec:paths} is in fact more general: it can be adapted to any resetting protocol, because it only requires the distribution of reset times. The Poissonian case is special since the inter-reset times are exponential, which ensures full Markovianity and allows a complete mathematical treatment. For non-Poissonian resetting the same concatenation method remains applicable, but explicit computations depend on the knowledge of the inter-arrival time law.
	\\
	The reflecting condition at the origin plays a key role in our analysis. Mathematically, it guarantees that trajectories remain confined in the half-line; physically, it can be interpreted as an impenetrable barrier, which forces the system to accumulate “local time” at the boundary instead of crossing it. This mechanism clarifies the role of the origin both in the forward process and in its time-reversed version. The same methodology could also be applied to different boundary conditions: for instance, partially absorbing or sticky boundaries correspond to different ways of weighting or delaying the local time at zero. In the sticky case one would have to take into account not only the diffusive excursions but also the time intervals where the process remains stuck at the boundary.
	\\
	Finally, the choice of resetting to the origin is natural for our setting, since in the time-reversed dynamics it leads to a well-defined structure of reflections and jumps described by the NLBVPs. If the reset location were not fixed but chosen randomly, according to a distribution, the situation would change: the reversed process would involve jumps from random positions, and the resulting boundary conditions would no longer be purely of the non-local type considered here. Such generalizations are left for future investigation.

	\section{Proofs of main results}
	\label{sec:proofs}
	\begin{proof}[Proof of Theorem \ref{thm:resetting-refl}]
		First of all, we recall that, for $\lambda >0$
		\begin{align*}
			\int_0^\infty e^{-\lambda t} g(t,x) dt &= \frac{1}{2} \frac{e^{-\vert x \vert \sqrt{\lambda}}}{\sqrt{\lambda}}.
		\end{align*}
		We use it, together with \eqref{density-reflected}, to calculate the resolvent of the solution, for $\lambda >0$, 
		\begin{align*}
			&\int_{0}^\infty e^{-\lambda t} u(t,x) dt = \int_{0}^\infty e^{-\lambda t} \mathbf{E}_x[f(X^+_t)] dt\\
			&= \int_{0}^\infty \int_{0}^\infty  e^{-\lambda t} f(y) \mathbf{P}_x (X_t^+ \in dy) dt\\
			&= \int_0^\infty f(y) \int_0^\infty e^{-\lambda t} \left[e^{-rt} (g(t,x+y) + g(t, y-x) ) + \int_{0}^{t} 2r e^{-rs} g(s,y) ds\,\right] dt\, dy\\
			&=\int_0^\infty f(y) \left[\frac{1}{2} \frac{e^{-(x+y) \sqrt{\lambda + r}}}{\sqrt{\lambda + r}} + \frac{1}{2} \frac{e^{-\vert y-x \vert \sqrt{\lambda + r}}}{\sqrt{\lambda + r}}\right] dy + \frac{r}{\lambda} \int_{0}^\infty f(y) \frac{e^{-y \sqrt{\lambda + r}}}{\sqrt{\lambda + r}} dy.
		\end{align*}
		For simplicity, we use the notation 
		\begin{align*}
			I_1 &:= \int_0^\infty f(y) \frac{1}{2} \frac{e^{-(x+y) \sqrt{\lambda + r}}}{\sqrt{\lambda + r}} dy\\
			I_2 &:= \int_{x}^{\infty}  f(y) \frac{1}{2} \frac{e^{-(y-x) \sqrt{\lambda + r}}}{\sqrt{\lambda + r}} dy\\
			I_3 &:=\int_{0}^{x} f(y) \frac{1}{2} \frac{e^{-(x-y) \sqrt{\lambda + r}}}{\sqrt{\lambda + r}} dy.
		\end{align*}
		Thus, by decomposing the absolute value, we obtain
		\begin{align*}
			\widetilde{u}(\lambda, x)&:= \int_{0}^\infty e^{-\lambda t} u(t,x) dt \\
			&= I_1+I_2+I_3 +  \frac{r}{\lambda} \int_{0}^\infty f(y) \frac{e^{-y \sqrt{\lambda + r}}}{\sqrt{\lambda + r}} dy,
		\end{align*}
		for which
		\begin{align*}
			\frac{d}{dx} \widetilde{u}(\lambda, x) \vert_{x=0} &= - \sqrt{\lambda + r} \int_{0}^\infty \frac{f(y)}{2} \frac{e^{-y \sqrt{\lambda + r}}}{\sqrt{\lambda + r}} dy - \frac{1}{2\sqrt{\lambda + r}} f(0)\, + \\ & +\,\sqrt{\lambda + r} \int_{0}^{\infty} \frac{f(y)}{2} \frac{e^{-y \sqrt{\lambda + r}}}{\sqrt{\lambda + r}} dy 
			\, + \frac{1}{2\sqrt{\lambda + r}} f(0)\\
			&= 0,
		\end{align*}
		Now, we move on the diffusion equation with jumps. 
		For the second derivative, we have
		\begin{align*}
			\frac{d^2}{dx^2} \widetilde{u}(\lambda, x) 
			= (\lambda + r) (I_1+I_2+I_3) - f(x).
		\end{align*}
		The Laplace transform of the first time derivative of $u$ is
		\begin{align*}
			\int_{0}^{\infty} e^{-\lambda t} \frac{d}{dt} u(t,x) dt&= \lambda \widetilde{u}(\lambda, x) - f(x)\\
			&= \lambda (I_1+I_2+I_3) - f(x) + r \int_{0}^\infty f(y) \frac{e^{-y \sqrt{\lambda + r}}}{\sqrt{\lambda + r}} dy.
		\end{align*}
		For the two terms related to the resetting, we get
		\begin{align*}
			- r \widetilde{u}(\lambda, x) = - r( I_1+I_2+I_3) - \frac{r^2}{\lambda} \int_{0}^\infty f(y) \frac{e^{-y \sqrt{\lambda + r}}}{\sqrt{\lambda + r}} dy
		\end{align*}
		and
		\begin{align*}
			r \widetilde{u}(\lambda, 0) 
			= r \int_{0}^\infty f(y) \frac{e^{-y \sqrt{\lambda + r}}}{\sqrt{\lambda + r}} dy + \frac{r^2}{\lambda} \int_{0}^\infty f(y) \frac{e^{-y \sqrt{\lambda + r}}}{\sqrt{\lambda + r}} dy.
		\end{align*}
		By putting everything together, we verify that
		\begin{align*}
			\lambda  \widetilde{u}(\lambda, x) - f(x) = \frac{d^2}{dx^2} \widetilde{u}(\lambda, x) - r \widetilde{u}(\lambda, x) + - r \widetilde{u}(\lambda, 0),
		\end{align*}
		and, by the uniqueness of the Laplace transform inversion (Lerch's theorem), we conclude that \( u(t,x)=\mathbf{E}_x[f (X_t^+)] \) is the solution. 
		Since \(f \in C_b[0,\infty)\), then we have \( \vert \vert u \vert \vert_\infty \leq \vert \vert f \vert \vert_\infty\).
	\end{proof}
	\begin{proof}[Proof of Proposition \ref{prop:stationary}]
		Let \(t_0,..., t_n\) be a sequence of times such that \(0=t_0 < t_1 <...<t_n=T\). For the finite dimensional distributions of \((X^R_t , t \in [0,T], \mathbf{P}_{\mu^+})\)
		\begin{align*}
			&\mathbf{E}_{\mu^+}[f(X^R_{t_0},..., X^R_{t_n})]=\mathbf{E}_{\mu^+}[f(X^+_{T-t_0},..., X^+_{T-t_n})]\\
			&= \int \mu^+(x_n) dx_n \, \mathbf{P}_{x_n}(X^+_{t_n - t_{n-1}} \in dx_{n-1}) \,
			\mathbf{P}_{x_{n-1}}(X^+_{t_{n-1} - t_{n-2}} \in dx_{n-2}) \cdots \\
			&\quad \cdots \mathbf{P}_{x_2}(X^+_{t_2 - t_1} \in dx_1) \,
			\mathbf{P}_{x_1}(X^+_{t_1} \in dx_0) \, f(x_0, \ldots, x_n) \\
			&= \int \mu^+(x_0) dx_0 \, \frac{1}{\mu^+(x_0)} \mathbf{P}_{x_1}(X^+_{t_1} \in dx_0) \mu^+(x_1) \, \frac{1}{\mu^+(x_1)} \mathbf{P}_{x_2}(X^+_{t_2 - t_1} \in dx_1) \mu^+(x_2) \cdots \\
			&\quad \cdots  \mu^+(x_{n-1}) \, \frac{1}{\mu^+(x_{n-1})} \mathbf{P}_{x_n}(X^+_{t_n - t_{n-1}} \in dx_{n-1}) \mu^+(x_n) dx_n \, f(x_0, \ldots, x_n),
		\end{align*}
		where we have used that \(\{x : \mu^+(dx)=0\} \) has zero Lebesgue measure. But, for definition 
		\begin{align*}
			\mathbf{E}_{\mu^+}[f(X^R_{t_0},..., X^R_{t_n})]&= \int \mu^+(x_0) dx_0 \, 
			\mathbf{P}_{x_0}(X^R_{t_1} \in dx_1) 
			\mathbf{P}_{x_1}(X^R_{t_2 - t_1} \in dx_2) \cdots \\
			&\cdots  
			\mathbf{P}_{x_{n-1}}(X^R_{t_n - t_{n-1}} \in dx_n) 
			f(x_0, \ldots, x_n),
		\end{align*}
		then \eqref{duality-density} holds. For the semigroups, we observe that
		\begin{align*}
			\langle P^+_t f , g \rangle_{\mu^+} &= \int_0^\infty P^+_t f(x) g(x) \mu^+(x) dx\\
			&=\int_0^\infty g(x) \left(\int_0^\infty \mathbf{P}_x (X^+_t \in dy) f(y) \right) \mu^+(x) dx\\
			&=\int_0^\infty g(x) \left(\int_0^\infty \frac{\mathbf{P}_y (X^R_t \in dx)}{\mu^+(x)} f(y) \right) \mu^+(x) \mu^+(y) dy\\
			&= \int_0^\infty f(y) \mu^+(y)\left( \int_0^\infty \mathbf{P}_y(X^R_t \in dx) g(x) \right) dy\\
			&=\int_0^\infty f(y) P^R_t g(y) \mu^+(y) dy\\
			&=\langle  f , P^R_t g \rangle_{\mu^+},
		\end{align*}
		where we have used \eqref{duality-density}. The duality formula for the infinitesimal generators \eqref{duality-generator} follows from standard arguments, by taking the limit of the corresponding formula for the semigroups.
	\end{proof}
	\begin{proof}[Proof of Theorem \ref{thm:time-reverse}]
		First, we observe that \((\widetilde{A}, {D}(\widetilde{A}))\) satisfies \eqref{duality-generator}. Indeed, for \(g \in {D}(\widetilde{A})\) and \(f \in {D}(A^+)\), we see
		\begin{align*}
			\int_0^\infty f(x) \widetilde{A}g(x) \mu^+(dx) &= \int_0^\infty f(x) [g''(x) - 2 \sqrt{r} g'(x)] \mu^+(x)dx\\
			&=\int_0^\infty f(x) g''(x) \mu^+(x) dx - 2 \sqrt{r} \int_0^\infty f(x) g'(x) \mu^+(x) dx\\
			&=: I_1 + I_2.
		\end{align*}
		We have
		\begin{align*}
			&I_1= \int_0^\infty f(x) g''(x) \mu^+(x) dx = [f(x) g'(x) \mu^+(x)]^\infty_0-\int_0^\infty g'(x) \frac{d}{dx} [f(x) \mu^+(x)] dx\\
			&=-\sqrt{r} f(0) g'(0) - \int_{0}^{\infty} g'(x) f'(x) \mu^+(x) dx + \sqrt{r} \int_0^\infty g'(x) f(x) \mu^+(x) dx\\
			&=r f(0) \int_0^\infty (g(x) - g(0))\mu^+(x) dx - [g(x) f'(x) \mu^+(x)]^\infty_0 + \int_0^\infty g(x) \frac{d}{dx} [f'(x)\mu^+(x)] dx\\
			& \, + [\sqrt{r} g(x) f(x)\mu^+(x)]^\infty_0- \sqrt{r} \int_0^\infty g(x) \frac{d}{dx} [f(x) \mu^+(x)] dx\\
			&=r \int_0^\infty f(0) g(x) \mu^+(x) dx - r f(0) g(0) + \int_0^\infty g(x) f''(x) \mu^+(x) dx - \sqrt{r} \int_0^\infty g(x) f'(x) \mu^+(x) dx\\
			& \ - rf(0) g(0) - \sqrt{r}\int_0^\infty g(x) f'(x) \mu^+(x) dx + r \int_0^\infty g(x) f(x) \mu^+(x) dx\\
			&=\int_0^\infty f''(x) g(x) \mu^+(x) dx + r \int_0^\infty f(0) g(x) \mu^+(x) dx - 2 r f(0) g(0)\\
			&+ r \int_0^\infty f(x) g(x) \mu^+(x) dx - 2 \sqrt{r} \int_0^\infty f'(x) g(x) \mu^+(x) dx.
		\end{align*}
		For $I_2$, we get
		\begin{align*}
			I_2&=-2 \sqrt{r} \int_0^\infty f(x) g'(x) \mu^+(x) dx\\
			&=-2 \sqrt{r} [f(x) g(x) \mu^+(x)]^\infty_0 + 2 \sqrt{r}\int_0^\infty g(x) \frac{d}{dx}[f(x) \mu^+(x)] dx\\
			&= 2 r f(0) g(0) + 2 \sqrt{r} \int_0^\infty g(x) f'(x) \mu^+(x) dx - 2r\int_0^\infty f(x) g(x) \mu^+(x) dx.
		\end{align*}
		Then, we conclude
		\begin{align*}
			I_1+I_2&= \int_0^\infty f''(x) g(x) \mu^+(x) dx + r \int_0^\infty f(0) g(x) \mu^+(x) dx - r \int_0^\infty f(x) g(x) \mu^+(x) dx\\
			&=\int_0^\infty A^+ f(x) g(x) \mu^+(x) dx
		\end{align*}
		and the first claim holds. 
		Now we show that
		\begin{align*}
			\mathbf{E}_{\mu^+} [f(X^R_t)]=\mathbf{E}_{\mu^+}[f(\widetilde{X}_t)].
		\end{align*}
		for \(f \in C_b(0,\infty)\) and \(t \in [0,T]\). 
		Since \(\mu^+\) is the stationary distribution for \(X^+\) and \(T-t \geq 0\), then we have
		\begin{align*}
			\mathbf{E}_{\mu^+} [f(X^R_t)]=	\mathbf{E}_{\mu^+}[f(X^+_{T-t})]=\int_0^\infty P_{T-t}^+ f(x) \mu^+(dx)= \int_0^\infty f(x) \mu^+(dx).
		\end{align*}
		First, we see that \(\mu^+\) is also a stationary measure for the process \(\widetilde{X}\), which is equivalent to saying that, \cite[Proposition 9.2]{markov-book},
		\begin{align*}
			\int_0^\infty \widetilde{A}f(x) \mu^+(dx)=0
		\end{align*}
		with \(f^\prime(x) + \mathbf{D}^\Psi_x f(x)=0\) in \(x=0\). We rewrite the boundary condition as
		\begin{align}
			\label{bc-reverse}
			\left(f^\prime(x) + \mathbf{D}^\Psi_x f(x)\right)\vert_{x=0}&=f^\prime(0) + \int_0^\infty (f(x) - f(0)) r e^{-\sqrt{r} x} dx \notag\\
			&=f^\prime(0) - \sqrt{r} f(0) + r \int_0^\infty f(x) e^{-\sqrt{r} x} dx=0.
		\end{align}
		Then, for \(f \in {D(\widetilde{A})} \)
		\begin{align*}
			\int_0^\infty \widetilde{A}f(x) \mu^+(dx)&= \int_0^\infty(f^{\prime \prime}(x) - 2 \sqrt{r} f^\prime(x)) \sqrt{r} e^{-\sqrt{r} x} dx\\
			&=[\sqrt{r} f^\prime(x) e^{-\sqrt{r} x}]^\infty_0 + r \int_0^\infty f^\prime(x) e^{-\sqrt{r}x} dx - 2r\int_0^\infty f^\prime(x) e^{-\sqrt{r}x} dx\\
			&=-\sqrt{r} f^\prime(0) - r \int_0^\infty f^\prime(x) e^{-\sqrt{r}x} dx\\
			&=-\sqrt{r} f^\prime(0) - [r f(x) e^{-\sqrt{r} x}]^\infty_0 - r\sqrt{r} \int_0^\infty f(x) e^{-\sqrt{r}x} dx\\
			&= -\sqrt{r} f^\prime(0) + r f(0) - r\sqrt{r} \int_0^\infty f(x) e^{-\sqrt{r}x} dx\\
			&= -\sqrt{r}(f^\prime(0) - \sqrt{r} f(0) + r \int_0^\infty f(x) e^{-\sqrt{r} x} dx)=0,
		\end{align*}
		where in the last equality we have used \eqref{bc-reverse}. 
		Thus, denoting by \(\widetilde{P}\) the semigroup of \(\widetilde{X}\) and using the fact that \(\mu^+\) is also a stationary measure for \(\widetilde{X}\), we obtain that 
		\begin{align*}
			\mathbf{E}_{\mu^+}[f(\widetilde{X}_t)]=\int_0^\infty \widetilde{P}_tf(x) \mu^+(dx)=\int_0^\infty f(x) \mu^+(dx)=\mathbf{E}_{\mu^+}[f(X^R_{t})],
		\end{align*}
		and our claim holds.
	\end{proof}
	\begin{proof}[Proof of Theorem \ref{thm:paths}]
		First, we observe what happens in a fixed time interval \([0,k]\), for \(k>0\) and \(k \ne 1\). Since \(B^+_0=0\), the density is \(\mathbf{P}_0 (B^+_t \in dx) = 2 g(t,x)\).  
		From \cite[Theorem 2.1, Formula (2.4)]{reverse-diffusion1} and \cite[Lemma 2.1, Formula (2.8)]{reverse-reflected}, we have that the only effect of choosing the interval \([0,1]\) is in the change of variable \(1-t\). Therefore, we can obtain the same results, i.e. the same generator \eqref{Lsegnato+}, by considering \(\sqrt{2} W^+\) on \([0,k]\), with the change of variable \(k-t\), and we have that \(B^+_{k-t}\) is governed by
		\begin{align*}
			\overline{L}^k_t f(x)&= \frac{d^2}{dx^2} f(x) + \left(\frac{2}{2 g(k-t,x)} \frac{d}{dx} 2 g(k-t,x)\right) \frac{d}{dx} f(x)\\
			&= \frac{d^2}{dx^2} f(x) + \left(\frac{2}{ g(k-t,x)} \frac{d}{dx}  g(k-t,x)\right) \frac{d}{dx} f(x)
		\end{align*}
		with \(\frac{d}{dx} f(x)=0\).
		We now consider the case where \( k \) is random. The difference when computing the change of variable \( T_1 - t \) lies in the fact that \( T_1 \) now follows an \(\text{Exp}(r)\) distribution, as it represents the first jump time of \( N \), and therefore
		\begin{align*}
			2 g(T_1-t,x) dx&=\int_0^\infty \mathbf{P}_0(B^+_{k-t} \in dx) \mathbf{P}(T_1 \in dk)\\&=\int_0^\infty 2 g(k-t,x) r e^{-rk} dk dx\\
			&=2 \sqrt{r} e^{-rt} e^{-x\sqrt{r}} dx.
		\end{align*}
		From the new density under the time change \( T_1 - t \), we obtain that the drift term is  
		\[\frac{2}{ g(T_1-t,x)} \frac{d}{dx}  g(T_1-t,x) =  \frac{2}{\sqrt{r} e^{-rt} e^{-x\sqrt{r}}} \frac{d}{dx} \sqrt{r} e^{-rt} e^{-x\sqrt{r}}=-2\sqrt{r}. \]
		Thus, we conclude that the generator of \( B^+_{T_1 - t} \) is  
		\[	\widetilde{A}^+ f(x)= \frac{d^2}{dx^2} f(x) - 2 \sqrt{r} \frac{d}{dx} f(x)
		\]
		with the boundary condition \(\frac{d}{dx} f(0)=0\), since it is not affected by the time change, and it coincides with the one of the process \(\widetilde{B}\).
	\end{proof}
	\begin{proof}[Proof of Theorem \ref{thm:trace}]
		First, we see that
		\begin{align*}
			T_1 := B_1 \circ (L^\Phi)^{-1} \, ; \quad \text{and }\quad T_2:= B_2 \circ (L^\Psi \circ \widetilde{\gamma})^{-1},
		\end{align*}
		where we denote with $-1$  the inverse process and we have used Theorem \ref{tm:inverselocaltime} and Theorem \ref{tm:local-time-pallino} for the characterization of \(\ell^+\) and \(\widetilde{\ell}\) as inverses of subordinators.
		But, from Theorem \ref{tm:same-local-times}, we know that $L^\Phi$ and $L^\Psi \circ \widetilde{\gamma}$ have the same law, than also $T_1$ and $T_2$ have the same law. In particular, we know that the inverse of \(L^\Phi\) is the subordinator \(H^\Phi\), then \(T_1\) and \(T_2\) are subordinated Brownian motions, and their generator when is restricted to ${D}(\Delta)$, is given by the Phillips' representation (or the Bochner subordination) \cite[Theorem 32.1]{sato}
		\begin{align*}
			-\Phi(-\Delta) v(x)= \int_0^\infty (S_zv(x) - v(x)) \Pi^\Phi (dz),
		\end{align*}
		where $\Pi^\Phi(dz)= \frac{e^{-rz} (2rz+1)}{2 \sqrt{\pi} z^{3/2}}\, dz$ is the L\'evy measure related to $\Phi$ and $S$ is the semigroup associated to the Brownian motion on $\mathbb{R}$, such that the characteristic symbol is $\widehat{S}_z = e^{-z \xi^2}$. Thus, we have that the Fourier transform is, for $\xi \in \mathbb{R}$,
		\begin{align}
			\label{fourier-trace}
			\int_{\mathbb{R}} e^{-i \xi x } \left(-\Phi(-\Delta) v(x)\right) dx= -\Phi(\xi^2) \widehat{v}(\xi)= -\frac{\xi^2}{\sqrt{\xi^2 + r}} \widehat{v}(\xi).
		\end{align}
		where $\widehat{v}$ is the Fourier transform of the function $v$. Now, we observe how the operators $K_1$ and $K_2$ share the same Fourier transform as in \eqref{fourier-trace}. The problem \eqref{P1} on \( H \) can be rewritten as
		\begin{align*}
			\frac{\partial^2}{\partial x^2} u_1(x,y) + \frac{\partial^2}{\partial y^2} u_1(x,y) + r (u_1(x,0) - u_1(x,y))=0,
		\end{align*}
		by applying the Fourier transform $\mathcal{F}_x$ on the first variable, we obtain
		\begin{align*}
			- \xi^2 \mathcal{F}_x u_1(\xi,y) + \frac{\partial^2}{\partial y^2} \mathcal{F}_x u_1(\xi,y) + r (\mathcal{F}_x u_1(\xi,0) - \mathcal{F}_x u_1(\xi,y))=0.
		\end{align*}
		Then, the bounded solution of this ODE is
		\begin{align}
			\label{ODE-resetting}
			\mathcal{F}_x u_1(\xi,y) = \frac{\xi^2}{\xi^2 + r} e^{-y\sqrt{\xi^2 + r} } \widehat{f}(\xi) + \frac{r}{\xi^2+r} \widehat{f}(\xi),
		\end{align}
		indeed 
		\begin{align*}
			\mathcal{F}_x u_1(\xi,0)= \widehat{f}(\xi)
		\end{align*}
		and, by replacing in the ODE, we conclude
		\begin{align*}
			- (\xi^2 + r) \left(\frac{\xi^2}{\xi^2 + r} e^{-y\sqrt{\xi^2 + r} } \widehat{f}(\xi) + \frac{r}{\xi^2+r} \widehat{f}(\xi)\right) + (\xi^2+r) \frac{\xi^2}{\xi^2 + r} e^{-y\sqrt{\xi^2 + r} } + r \widehat{f}(\xi)=0.
		\end{align*}
		Thus, the Fourier transform of the Dirichlet-to-Neumann operator $K_1$ is
		\begin{align*}
			\mathcal{F} K_1 f(\xi)= \frac{\partial}{\partial y} \mathcal{F}_x u_1(\xi,y) \big\vert_{y=0}= -\frac{\xi^2 \sqrt{\xi^2 + r}}{\xi^2 + r} \widehat{f}(\xi)=- \frac{\xi^2}{\sqrt{\xi^2 + r}} \widehat{f}(\xi),
		\end{align*}
		hence it coincides with \eqref{fourier-trace} as we expected by the definition as subordinate Brownian motion. Now, we move on \eqref{P2} and we rewrite the problem on \(H\) as
		\begin{align*}
			\frac{\partial^2}{\partial x^2} u_2(x,y) + \frac{\partial^2}{\partial y^2} u_2(x,y) - 2 \sqrt{r} \frac{\partial}{\partial y} u_2(x,y)=0.
		\end{align*}
		By applying the Fourier transform on the first variable, we obtain
		\begin{align*}
			- \xi^2 \mathcal{F}_x u_2(\xi, y) + \frac{\partial^2}{\partial y^2} u_2(\xi,y) - 2 \sqrt{r} \frac{\partial}{\partial y} u_2(\xi,y)=0.
		\end{align*}
		The general solution of this ODE is
		\begin{align*}
			\mathcal{F}_x u_2(\xi, y)= (\alpha e^{-\frac{y}{2}(\sqrt{4 \xi^2 + 4r} - 2\sqrt{r})} + \beta e^{\frac{y}{2}(\sqrt{4 \xi^2 + 4r} + 2\sqrt{r})}) \widehat{f}(\xi),
		\end{align*}
		since $\sqrt{4 \xi^2 + 4r} - 2\sqrt{r}>0$ and we are dealing with bounded solutions, we have $\alpha=1, \, \beta=0$. Then, we have
		\begin{align*}
			\mathcal{F}_x u_2(\xi,y)= e^{-y (\sqrt{\xi^2 +r} - \sqrt{r})} \widehat{f}(\xi). 
		\end{align*}
		For the operator $K_2$ we get
		\begin{align*}
			\mathcal{F} K_2 f(\xi)&= \frac{\partial}{\partial y} u_2(\xi, y) \big\vert_{y=0} + r \int_0^\infty (\mathcal{F}_x u_2(\xi,y) - \mathcal{F}_x u_2(\xi,0)) e^{-\sqrt{r} y} dy\\
			&=(-\sqrt{\xi^2 +r} + \sqrt{r}) \widehat{f}(\xi) + r \int_0^\infty (e^{-y((\sqrt{\xi^2 +r} - \sqrt{r}))} - 1) e^{-\sqrt{r} y} dy\\
			&=(-\sqrt{\xi^2 +r} + \sqrt{r}) \widehat{f}(\xi) + r \widehat{f}(\xi) \left(\frac{1}{\sqrt{\xi^2 + r} - \frac{1}{\sqrt{r}}}\right)\\
			&=- \frac{\xi^2}{\sqrt{\xi^2 + r}} \widehat{f}(\xi),
		\end{align*}
		as we expected. Thus, \(K_1\) and \(K_2\) share the same Fourier transform, which coincides with the one of the generator of the trace processes. We note that, since we assumed \( f \in \mathcal{S}(\mathbb{R}) \), it follows that \( \widehat{f} \in \mathcal{S}(\mathbb{R}) \) as well. Therefore, \( \mathcal{F} K_1 \) and \( \mathcal{F} K_2 \) also belong to \( \mathcal{S}(\mathbb{R}) \).
		By applying the inverse Fourier transform, we obtain the equality.
\end{proof}
\section*{Appendix}
	\textbf{Background on Subordinators}
	The Laplace exponent \(\Phi\) of the subordinator, in \eqref{LapH}, is uniquely described by the non-negative real number \(d\) and by the L\'evy measure \(\Pi^\Phi\) on \((0, \infty)\), such that
	\[
	\int_0^\infty (1 \wedge z) \Pi^\Phi(dz) < \infty.
	\]
	The L\'evy-Khintchine representation for the Laplace exponent \(\Phi\) is given by (\cite[Theorem 3.2]{schilling2012bernstein})
	\begin{align}
		\label{LevKinFormula}
		\Phi(\lambda) =  d \lambda + \int_0^\infty (1 - e^{-\lambda z}) \Pi^\Phi(dz), \quad \lambda > 0,
	\end{align}
	where the drift coefficient \(d\) is defined as
	\begin{align*}
		d = \lim_{\lambda \to \infty} \frac{\Phi(\lambda)}{\lambda}.
	\end{align*}
	The function \(\Phi\) is a Bernstein function, so such that
	\begin{align*}
		(-1)^{n-1} \Phi^{(n)}(\lambda) \geq 0 \quad \text{ for } n = 1, 2, \ldots
	\end{align*}
	uniquely associated with \(H^\Phi\) (\cite[Theorem 5.2]{schilling2012bernstein}). For the reader's convenience, we also recall that
	\begin{align}
		\label{tailSymb}
		\frac{\Phi(\lambda)}{\lambda} = d + \int_0^\infty e^{-\lambda z} \overline{\Pi}^\Phi(z) \, dz, \quad \overline{\Pi}^\Phi(z) =  \Pi^\Phi(z, \infty),
	\end{align}
	where \(\overline{\Pi}^\Phi\) denotes the \textit{tail} of the L\'evy measure \(\Pi^\Phi\).\\
	For completeness, let us also recall that \cite[Formula (3.13)]{Meerschaert-triangular}
	\begin{align}
		\label{symbol:L}
		\int_{0}^\infty e^{-\lambda t} \mathbf{P}_0 (L_t^\Phi \in dx) dt = \frac{\Phi(\lambda)}{\lambda} e^{-x\Phi(\lambda)}.
	\end{align}

We now provide the results we need on reflecting Brownian motion with drift and its local time.
\begin{theorem}
	For the reflected Brownian motion, with drift $-2\sqrt{r}$, $\widetilde{B}$ we have
	\begin{align}
		\label{drift:density}
		\mathbf{P}_x(\widetilde{B}_t \in dy)= e^{-rt} e^{\sqrt{r}(x-y)} [g(t, y-x) + g(t, y+x) + 2 \sqrt{r} \int_0^\infty e^{\sqrt{r} w} g(t, w+x+y) dw] \, dy
	\end{align}
	and the Laplace transform of the joint density with the local time at zero is
	\begin{align}
		\label{drift:joint}
		\int_0^\infty e^{-\lambda t} \mathbf{P}_0(\widetilde{B}_t \in dy, \widetilde{\gamma}_t \in dw) dt = e^{-\sqrt{r} (y-w)} e^{- \sqrt{\lambda + r}(y+w) } dy \, dw.
	\end{align}
\end{theorem}
\begin{proof}
	The formula \eqref{drift:density} is a straightforward adaptation of the density found in \cite[Appendix 1, Section 16]{BorodinSalminen}.
	For the formula \eqref{drift:joint}, we simply apply the time Laplace transform for \cite[Formula 1.3.8, page 254]{BorodinSalminen} for \(x=0\).
\end{proof}
\begin{proof}[Proof of Theorem \ref{NLBVP}]
	\phantomsection
	\label{proof:NLBVP}
	We want to provide that \(u(t,x)=\mathbf{E}_x[f (\widetilde{X}_t)]\) is the solution of the problem. We move to the resolvent, for \(\lambda>0\),
	\begin{align*}
		R_\lambda f(x)&=\int_0^\infty e^{-\lambda t} u(t,x) dt \\
		&=\mathbf{E}_x \left[\int_0^\infty e^{-\lambda t} f\left(\widetilde{X}_t\right)\right].
	\end{align*}
	We introduce
	\begin{align*}
		\tau_0:=\inf\{s \geq 0 : \widetilde{X}_s=0\}
	\end{align*}
	and we observe that, before $\tau_0$, the process $\widetilde{X}$ behaves as $\widetilde{B}^D$ a Brownian motion with drift $-2\sqrt{r}$ killed at zero related to Dirichlet boundary conditions. We also recall, \cite[Formula 2.0.1, page 295]{BorodinSalminen}
	\begin{align*}
		\mathbf{E}_x\left[e^{-\lambda \tau_0}\right]=e^{-x(\sqrt{\lambda+ r} - \sqrt{r})}.
	\end{align*}
	Then, we rewrite the resolvent as, $\lambda>0$,
	\begin{align}
		\label{resolvent}
		R_\lambda f(x) &= \mathbf{E}_x\left[ \int_0^\infty e^{-\lambda t} f \left(\widetilde{X}_t \right) dt \right] \notag \\
		&=\mathbf{E}_x\left[ \int_0^{\tau_0} e^{-\lambda t} f \left(\widetilde{X}_t \right) dt \right] + \mathbf{E}_x\left[ \int_{\tau_0}^\infty e^{-\lambda t} f \left(\widetilde{X}_t \right) dt \right] \notag \\
		&=\mathbf{E}_x\left[ \int_0^\infty e^{-\lambda t} f \left(\widetilde{B}^D_t \right) dt \right] + \mathbf{E}_x\left[e^{-\lambda \tau_0}\right] \mathbf{E}_0\left[ \int_0^\infty e^{-\lambda t} f \left(\widetilde{X}_t \right) dt \right] \notag \\
		&= R_\lambda^D f(x) + e^{-x(\sqrt{\lambda+ r} - \sqrt{r})} R_\lambda f(0),
	\end{align}
	where $R_\lambda^D$ is the resolvent associated to $\widetilde{B}^D_t$ and its density is
	\begin{align*}
		\mathbf{P}_x(\widetilde{B}^D_t \in dy)/dy = e^{- rt} e^{\sqrt{r}(x-y)} \left[g(t,y-x) - g(t,y+x)\right],
	\end{align*}
	where  $g(t, z) = e^{-z^2/ 4t}/\sqrt{4 \pi t}$. 
	We recall the well-known transforms
	\begin{align}
		\label{gLap1}
		\int_0^\infty e^{-\lambda t} g(t,x) dt = \frac{1}{2}\frac{e^{-\vert x \vert \sqrt \lambda}}{\sqrt \lambda}, \quad \lambda >0
	\end{align}
	and
	\begin{align}
		\label{gLap2}
		\int_0^\infty e^{-\lambda t} \frac{\vert x \vert }{t} g(t,x) dt = e^{- \vert x \vert \sqrt \lambda}, \quad \lambda>0.
	\end{align}
	Then, from \eqref{gLap1}, we have, $\lambda>0$,
	\begin{align*}
		R_\lambda^D f(x) &= \mathbf{E}_x\left[ \int_0^\infty e^{-\lambda t} f \left(\widetilde{B}^D_t \right) dt \right] \\
		&=\int_0^\infty \int_0^\infty f(y) e^{-\lambda t} e^{- rt} e^{\sqrt{r}(x-y)} \left[g(t,y-x) - g(t,y+x)\right] dt \, dy\\
		&=\frac{1}{2}\int_0^\infty \left(\frac{e^{-\vert x-y \vert \sqrt{\lambda+ r}}}{\sqrt{\lambda+ r}}- \frac{e^{-( x+y)  \sqrt{\lambda+ r}}}{\sqrt{\lambda+ r}} \right) e^{\sqrt{r}(x-y)}\, f(y) dy\\
		&=\frac{1}{2} \int_0^\infty \frac{e^{( x-y)  \sqrt{\lambda+ r}}}{\sqrt{\lambda+ r}} e^{\sqrt{r}(x-y)}\, f(y) dy - \frac{1}{2} \int_0^\infty \frac{e^{-( x+y)  \sqrt{\lambda+ r}}}{\sqrt{\lambda+ r}} e^{\sqrt{r}(x-y)} \, f(y) dy\ +\notag\\
		& - \frac{1}{2} \int_0^x \left( \frac{e^{( x-y)  \sqrt{\lambda+ r}}}{\sqrt{\lambda+ r}} - \frac{e^{-( x-y)  \sqrt{\lambda+ r}}}{\sqrt{\lambda+ r}} \right) e^{\sqrt{r}(x-y)}\, f(y) dy.
	\end{align*}
	We can easily verify that 
	\begin{align*}
		\frac{d^2}{dx^2}R_\lambda f(x) - 2\sqrt{r} \frac{d}{dx} R_\lambda f(x) = \lambda R_\lambda f(x) - f(x), \quad x>0, \lambda > 0,
	\end{align*}
	then the heat equation with drift \(-2 \sqrt{r}\) is verified. We now move on the boundary conditions, but first we have to understand the behaviour of the additive part \(R^\Psi\).

	We know that the process $H^\Psi L^\Psi$  is right-continuous with jumps, since $H^\Psi$ is right-continuous with jumps and $L^\Psi$ is continuous. If we enumerate the jumps of $H^\Psi$ with $l_1, l_2,...$, we can decompose $(0,\infty)$ in two sets, as done in \cite[Lemma 3.5]{Pilipenko}:
	\begin{align*}
		&\mathcal{J}^c:=\{t \geq 0 : H^\Psi L^\Psi_t = t\}\\
		&\mathcal{J} := \bigcup_{n \geq 1} \mathcal{J}_n= \bigcup_{n \geq 1} [l_n^{-}, l_n^+),
	\end{align*}
	where we have to include the point $l_n^-$ because $H^\Psi L^\Psi$ is right-continuous. From \eqref{pallino}, we see that the process outside from the jumps of $H^\Psi$ is a drifted reflecting Brownian motion otherwise we have to consider the jump. In particular, we have
	\begin{align*}
		\widetilde{X}_t=
		\begin{cases}
			\widetilde{B}_t \quad & \gamma_t \in \mathcal{J}^c\\
			l_n^+-\widetilde{\gamma}_t + \widetilde{X}_t \quad & \gamma_t \in \mathcal{J}_n.
		\end{cases}
	\end{align*}
	indeed if $\widetilde{\gamma},_t \in \mathcal{J}_n$ we have $H^\Psi L^\Psi \widetilde{\gamma}= l_n^+$. We observe that $\widetilde{\gamma}_t \in [l_n^-,l_n^+)$ if and only if $t \in [\widetilde{\gamma}_{-}^{-1}(l_n^-), \widetilde{\gamma}_{-}^{-1}(l_n^+))$, in fact $\widetilde{\gamma}_t$ is a continuous process and its right inverse $\widetilde{\gamma}_t^{-1}$ is a right-continuous process with jumps, then to make sure that the point $l_n^-$ is included we have to introduce
	\begin{align*}
		\widetilde{\gamma}_{-}^{-1}(t):=\inf \{s \geq 0 : \widetilde{\gamma}_s \geq t\},
	\end{align*}  
	which is left-continuous. We prefer writing $\widetilde{\gamma}_{-}^{-1}(t)$ over $\widetilde{\gamma}_{t-}^{-1}$, to avoid confusion with right and left points of $l_n$.  Then, the resolvent is
	\begin{align*}
		\mathbf{E}_{0}  \left[\int_0^\infty e^{-\lambda t} f (\widetilde{X}_t) dt\right]&= \mathbf{E}_{0}  \left[\int_{\mathcal{J}\cup \mathcal{J}^c} e^{-\lambda t} f(\widetilde{X}_t) dt\right]
		\\
		&=\mathbf{E}_{0}  \left[\int_{\mathcal{J}} e^{-\lambda t} f(\widetilde{X}_t) dt\right] + \mathbf{E}_{0}  \left[\int_{\mathcal{J}^c} e^{-\lambda t} f(\widetilde{X}_t) dt\right]\\
		&=\sum_{n \geq 1} \mathbf{E}_{0}  \left[\int_{\widetilde{\gamma}_{-}^{-1}(l_n^-)}^{\widetilde{\gamma}_{-}^{-1}(l_n^+)} e^{-\lambda t} f(\widetilde{X}_t) dt\right] + \mathbf{E}_{0}  \left[\int_{\mathcal{J}^c} e^{-\lambda t} f(\widetilde{X}_t) dt\right].
	\end{align*} 
	On \(\mathcal{J}\), we have that
	\begin{align*}
		&\sum_{n \geq 1} \mathbf{E}_{0}  \left[\int_{\widetilde{\gamma}_{-}^{-1}(l_n^-)}^{\widetilde{\gamma}_{-}^{-1}(l_n^+)} e^{-\lambda t} f(\widetilde{X}_t) dt\right]=\\
		&=\sum_{n \geq 1}  \mathbf{E}_{0} \left[\int_0^{\widetilde{\gamma}_{-}^{-1}(l_n^+)-\widetilde{\gamma}_{-}^{-1}(l_n^-)} e^{-\lambda (s + \widetilde{\gamma}_{-}^{-1}(l_n^-))} f(\widetilde{X}_{s + \widetilde{\gamma}_{-}^{-1}(l_n^-)}) ds\right].
	\end{align*}
	Before proceeding with the integration, we need to make some observations about the local time $\widetilde{\gamma}$ and its left-inverse $\widetilde{\gamma}_{-}^{-1}$. Since $\widetilde{\gamma}$ is continuous, we have $\widetilde{\gamma}_{\widetilde{\gamma}^{-1}_{-}(l_n^-)}=l_n^-$. For the left inverse, serving as the upper bound within the integral, we know that (see \cite[Proposition 1.3, Chapter X]{revuz-yor})
	\begin{align*}
		\widetilde{\gamma}^{-1}_{-}(l_n^+) - \widetilde{\gamma}{-}^{-1}(l_n^-)=\widetilde{\gamma}_{-}^{-1}(l_n) \circ \theta _{\widetilde{\gamma}^{-1}_{-}(l_n^-) },
	\end{align*}
	where $l_n=l_n^+ - l_n^-$ and $\theta$ is the shift operator. We also recall the additive functional property for the local time:
	\begin{align*}
		\widetilde{\gamma}_{s+\widetilde{\gamma}^{-1}_{-}(l_n^-)}=\widetilde{\gamma}_s \circ \theta_{\widetilde{\gamma}^{-1}_{-}(l_n^-) } + \widetilde{\gamma}_{\widetilde{\gamma}^{-1}_{-}(l_n^-)}= \widetilde{\gamma}_s \circ \theta_{\widetilde{\gamma}^{-1}_{-}(l_n^-) } + l_n^-.
	\end{align*}
	Then, the resolvent on \(\mathcal{J}\) is rewritten as
	\begin{align*}
		&\sum_{n \geq 1}  \mathbf{E}_{0} \left[\int_0^{\widetilde{\gamma}_{-}^{-1}(l_n^+)-\widetilde{\gamma}_{-}^{-1}(l_n^-)} e^{-\lambda (s + \widetilde{\gamma}_{-}^{-1}(l_n^-))} f(\widetilde{X}_{s + \widetilde{\gamma}_{-}^{-1}(l_n^-)}) ds\right]\\
		&=\sum_{n \geq 1} \mathbf{E}_{0}\left[ \mathbf{E}_{0} \left[ \left(\int_0^{\widetilde{\gamma}_{-}^{-1}(l_n)} e^{-\lambda s } f(l_n + \widetilde{B}_s - \widetilde{\gamma}_s) ds\right) \circ \theta_{\widetilde{\gamma}^{-1}_{-}(l_n^-) }\Big\vert \mathcal{F}_{\theta_{\widetilde{\gamma}_{-}^{-1}(l_n^-)}}\right]\right]
	\end{align*}
	where $\mathcal{F}$ is the natural filtration of $\widetilde{B}$. By using the strong Markov property of $\widetilde{B}$ with respect to the stopping time $\widetilde{\gamma}_{-}^{-1}(l_n^-)$, since from Fubini's theorem $l_n$ can be seen as a constant for \(\widetilde{B}\), and the fact that $\widetilde{\gamma}_{-}^{-1}(l_n^-)= \widetilde{\gamma}^{-1}_{l_n^-}$ a.s., we obtain
	\begin{align*}
		&\sum_{n \geq 1} \mathbf{E}_{0}\left[ \mathbf{E}_{0} \left[ \left(\int_0^{\widetilde{\gamma}_{-}^{-1}(l_n)} e^{-\lambda s } f(l_n + \widetilde{B}_s - \widetilde{\gamma}_s) ds\right) \circ \theta_{\widetilde{\gamma}^{-1}_{-}(l_n^-) }\Big\vert \mathcal{F}_{\theta_{\widetilde{\gamma}_{-}^{-1}(l_n^-)}}\right]\right]\\
		&=\sum_{n \geq 1} \mathbf{E}_{0}\left[e^{-\lambda \widetilde{\gamma}^{-1}_{l_n^-}} \mathbf{E}_{0} \left[\int_0^{\widetilde{\gamma}_{-}^{-1}(l_n)} e^{-\lambda s } f(l_n +\widetilde{B}_s - \widetilde{\gamma}_s) ds \right] \right]
	\end{align*}
	but, the process  $l_n-\widetilde{\gamma}_t + \widetilde{B}$,  since $t \leq \widetilde{\gamma}^{-1}(l_n) $,  behaves like the drifted Brownian motion \(\widetilde{B}_t\) , for \(t \leq \tau_0\) started at $l_n$ , with $\tau_0$ hitting time at zero for $\widetilde{B}$. Then, for the jump times \(t_n\) of the subordinator, we have
	\begin{align*}
		&\sum_{n \geq 1}  \mathbf{E}_{0} \left[e^{-\lambda \widetilde{\gamma}^{-1}_{l_n^-}} \mathbf{E}_{l_n}\left[ \int_{0}^{\tau_0} e^{-\lambda t} f(\widetilde{B}_t) dt \right]\right]
		=\sum_{n \geq 1} \mathbf{E}_{0} \left[e^{-(\sqrt{\lambda+r}-\sqrt{r}) t_n^-} {R}_\lambda^D f(l_n)\right]\\
		&=\sum_{n \geq 1} \mathbf{E}_{0} \left[e^{-(\sqrt{\lambda+r}-\sqrt{r}) H^\Psi_{l_n^-}} {R}_\lambda^D f(l_n)\right]\\
		&=\mathbf{E}_{0} \left[\int_{(0,\infty) \times (0,\infty)} e^{-(\sqrt{\lambda+r}-\sqrt{r}) H^\Psi_{t^-}} {R}_\lambda^D f(l) N(dt \times dl)\right],
	\end{align*}
	where $N(dt \times dl)$ is the random measure associated to $H^\Psi$. From \cite[Example II.4.1]{ikeda2014stochastic} we know that $N(dt \times dl)= dt \, \Pi^\Psi(dl)$, then we get
	\begin{align*}
		&\mathbf{E}_{0} \left[\int_{(0,\infty) \times (0,\infty)} e^{-(\sqrt{\lambda+r}-\sqrt{r}) H^\Psi_{t^-}} {R}_\lambda^D f( l) N(dt \times dl)\right]\\
		&=\int_0^\infty \int_0^\infty e^{-t \Psi(\sqrt{\lambda+r}-\sqrt{r})} {R}_\lambda^D f( l) dt \,\Pi^\Psi(dl)\\
		&=\frac{\int_0^\infty {R}_\lambda^D f(y) \Pi^\Psi(dl)}{\Psi(\sqrt{\lambda+r}-\sqrt{r})},
	\end{align*}
	which concludes
	\begin{align}
		\label{resolvent:jumps}
		\mathbf{E}_{0}  \left[\int_{\mathcal{J}} e^{-\lambda t} f(\widetilde{X}_t) dt\right]=\frac{\int_0^\infty {R}_\lambda^D f(y) \Pi^\Psi(dl)}{\Psi(\sqrt{\lambda+r}-\sqrt{r})}.
	\end{align}
	Now, for the integral on \(\mathcal{J}^c\), we observe that 
	\begin{align*}
		\mathbf{E}_{0}  \left[\int_{\mathcal{J}^c} e^{-\lambda t} f(\widetilde{X}_t) dt\right]&=\mathbf{E}_{0}  \left[\int_{\mathcal{J}^c} e^{-\lambda t} f(\widetilde{B}_t) dt\right] \\
		&=\mathbf{E}_{0}  \left[\int_0^\infty e^{-\lambda t} f(\widetilde{B}_t) dt\right]-\sum_{n \geq 1} \mathbf{E}_{0}  \left[\int_{\widetilde{\gamma}_{-}^{-1}(l_n^-)}^{\widetilde{\gamma}_{-}^{-1}(l_n^+)} e^{-\lambda t} f(\widetilde{B}_t) dt\right].
	\end{align*}
	For the first integral, by integrating \eqref{drift:joint} in \(dw\), we have
	\begin{align*}
		\mathbf{E}_{0}  \left[\int_0^\infty e^{-\lambda t} f(\widetilde{B}_t) dt\right]= \frac{1 }{\sqrt{\lambda + r} - \sqrt{r}} \int_0^\infty e^{-y(\sqrt{\lambda + r} + \sqrt{r}) } f(y) dy.
	\end{align*}
	For the second integral, we use the same calculation done on \(\mathcal{J}\), and we get
	\begin{align*}
		\sum_{n \geq 1} \mathbf{E}_{0}  \left[\int_{\widetilde{\gamma}_{-}^{-1}(l_n^-)}^{\widetilde{\gamma}_{-}^{-1}(l_n^+)} e^{-\lambda t} f(\widetilde{B}_t) dt\right]=\sum_{n \geq 1} \mathbf{E}_{0}\left[e^{-\lambda \widetilde{\gamma}^{-1}_{l_n^-}} \mathbf{E}_{0} \left[\int_0^{\widetilde{\gamma}_{-}^{-1}(l_n)} e^{-\lambda s } f(\widetilde{B}_s) ds \right] \right], 
	\end{align*}
	and from Dynkin's formula for resolvents at the stopping time \(\widetilde{\gamma}_{-}^{-1}\), we write
	\begin{align*}
		&\sum_{n \geq 1} \mathbf{E}_{0}\left[e^{-\lambda \widetilde{\gamma}^{-1}_{l_n^-}} \mathbf{E}_{0} \left[\int_0^{\widetilde{\gamma}_{-}^{-1}(l_n)} e^{-\lambda s } f(\widetilde{B}_s) ds \right] \right]\\
		&= \sum_{n \geq 1} \mathbf{E}_{0}\left[e^{-\lambda \widetilde{\gamma}^{-1}_{l_n^-}} \left(\mathbf{E}_{0}  \left[\int_0^\infty e^{-\lambda t} f(\widetilde{B}_t) dt\right] (1-e^{-(\sqrt{\lambda + r} - \sqrt{r}) l_n}) \right) \right]\\
		&+ \mathbf{E}_{0} \left[\int_{(0,\infty) \times (0,\infty)} e^{-(\sqrt{\lambda+r}-\sqrt{r}) H^\Psi_{t^-}} (1-e^{-(\sqrt{\lambda + r} - \sqrt{r}) l})  N(dt \times dl)\right] \\
		&\quad \times \left(\mathbf{E}_{0}  \left[\int_0^\infty e^{-\lambda t} f(\widetilde{B}_t) dt\right] \right)\\
		&=\frac{\int_0^\infty(1-e^{-(\sqrt{\lambda + r} - \sqrt{r}) l}) \Pi^\Psi(dl)}{\Psi(\sqrt{\lambda + r} - \sqrt{r})} \, \frac{1 }{\sqrt{\lambda + r} - \sqrt{r}} \int_0^\infty e^{-y(\sqrt{\lambda + r} + \sqrt{r})}f(y) dy.
	\end{align*}
	Then, we conclude
	\begin{align}
		\label{resolvent:no-jumps}
		&\mathbf{E}_{0}  \left[\int_{\mathcal{J}^c} e^{-\lambda t} f(\widetilde{X}_t) dt\right]=\\
		&=\left(1- \frac{\int_0^\infty(1-e^{-(\sqrt{\lambda + r} - \sqrt{r}) l}) \Pi^\Psi(dl)}{\Psi(\sqrt{\lambda + r})} \right)  \frac{1 }{\sqrt{\lambda + r} - \sqrt{r}}  \int_0^\infty e^{-y(\sqrt{\lambda + r} + \sqrt{r})} f(y) dy \notag\\
		&=\frac{1}{\Psi(\sqrt{\lambda + r} - \sqrt{r})} \int_0^\infty e^{-y(\sqrt{\lambda + r} + \sqrt{r})} f(y) dy.
	\end{align}
	By collecting \eqref{resolvent:jumps} and \eqref{resolvent:no-jumps}, the resolvent at the boundary is
	\begin{align}
		\label{resolvent:Xtilde0}
		R_\lambda f(0)&= \mathbf{E}_{0}  \left[\int_0^\infty e^{-\lambda t} f (\widetilde{X}_t) dt\right] \notag \\
		&=\mathbf{E}_{0}  \left[\int_{\mathcal{J}} e^{-\lambda t} f(\widetilde{X}_t) dt\right] + \mathbf{E}_{0}  \left[\int_{\mathcal{J}^c} e^{-\lambda t} f(\widetilde{X}_t) dt\right] \notag \\
		&=\frac{\int_0^\infty {R}_\lambda^D f(y) \Pi^\Psi(dl)}{\Psi(\sqrt{\lambda+r}-\sqrt{r})} + \frac{1}{\Psi(\sqrt{\lambda + r} - \sqrt{r})} \int_0^\infty e^{-y(\sqrt{\lambda + r} + \sqrt{r})} f(y) dy \notag\\
		&= \frac{\int_0^\infty e^{-y(\sqrt{\lambda + r} + \sqrt{r})} f(y) dy + \int_0^\infty {R}_\lambda^D f(y) \Pi^\Psi(dl)}{\Psi(\sqrt{\lambda + r} - \sqrt{r})}.
	\end{align}
	We want to verify that \eqref{resolvent}, with $R_\lambda f(0)$ given in \eqref{resolvent:Xtilde0}, satisfies the boundary conditions
	\begin{align}
		\label{bc:resolvent}
		\frac{d}{dx} R_\lambda f(x) + \mathbf{D}_x^\Psi R_\lambda f(x)=0, \quad \text{ in } x=0.
	\end{align}
	From the construction of $R_\lambda^D f(x)$, we have
	\begin{align*}
		\frac{d}{dx} R_\lambda^D f(x) \big\vert_{x=0} &= \int_0^\infty e^{-y(\sqrt{\lambda + r} + \sqrt{r})} dy;\\
		\mathbf{D}_x^\Psi R_\lambda^D f(x)&= \lim_{x \to 0}\int_{0}^\infty (R^D_\lambda f(x+y) - R^D_\lambda f(x)) \Pi^\Psi(dy)\\
		&= \int_0^\infty R_\lambda^D f(y) \Pi^\Psi(dy).
	\end{align*}
	For the part from the hitting time, we observe that
	\begin{align*}
		&\frac{d}{dx} e^{-x(\sqrt{\lambda+ r} - \sqrt{r})} R_\lambda f(0) \big\vert_{x=0}= -(\sqrt{\lambda+ r} - \sqrt{r}) R_\lambda f(0);\\
		&\mathbf{D}_x^\Psi e^{-x(\sqrt{\lambda+ r} - \sqrt{r})} R_\lambda f(0) = R_\lambda f(0) \, \lim_{x \to 0} \int_0^\infty(e^{-(x+y)(\sqrt{\lambda+ r} - \sqrt{r})} - e^{-x(\sqrt{\lambda+ r} - \sqrt{r})} ) \Pi^\Psi(dy)\\
		&=- R_\lambda f(0) \int_{0}^\infty (1- e^{-x(\sqrt{\lambda+ r} - \sqrt{r})}) \Pi^\Psi(dy).
	\end{align*}
	From the linearity of the  operators at the boundary, we conclude that
	\begin{align*}
		&0=\left(\frac{d}{dx} R_\lambda f(x) + \mathbf{D}_x^\Psi R_\lambda f(x)\right)_{x=0}
		\\&=\int_0^\infty e^{-y(\sqrt{\lambda + r} + \sqrt{r})} dy + \int_0^\infty R_\lambda^D f(y) \Pi^\Psi(dy) \,+\\
		&\, - R_\lambda f(0) \left((\sqrt{\lambda+ r} - \sqrt{r}) + \int_{0}^\infty (1- e^{-x(\sqrt{\lambda+ r} - \sqrt{r})}) \Pi^\Psi(dy) \right)\\
		&=\int_0^\infty e^{-y(\sqrt{\lambda + r} + \sqrt{r})} dy + \int_0^\infty R_\lambda^D f(y) \Pi^\Psi(dy) - \Psi(\sqrt{\lambda+ r} - \sqrt{r})R_\lambda f(0).
	\end{align*}
	Then, the resolvent at zero, such that the boundary conditions are satisfied, is given by
	\begin{align*}
		R_\lambda f(0)=\frac{\int_0^\infty e^{-y(\sqrt{\lambda + r} + \sqrt{r})} f(y) dy + \int_0^\infty {R}_\lambda^D f(y) \Pi^\Psi(dl)}{\Psi(\sqrt{\lambda + r} - \sqrt{r})},
	\end{align*}
	which coincides with the one obtained in \eqref{resolvent:Xtilde0} for the process $\widetilde{X}$. Hence, from the uniqueness of the inverse of the Laplace transform, $u(t,x)$ is the solution of the \eqref{eq:NLBVP}.
\end{proof}
\begin{proof}[Proof of Theorem \ref{tm:local-time-pallino}]
	We use the notation of \hyperref[proof:NLBVP]{\textit{Proof of Theorem \ref{NLBVP}}}. Let \(\widetilde{\ell}\) be the local time of \(\widetilde{X}\), so defined as
	\begin{align*}
		\widetilde{\ell}_t := \lim_{\varepsilon \to 0}\frac{1}{2 \varepsilon} \int_0^t \mathbf{1}_{\{\widetilde{X}_s < \varepsilon\}} ds,
	\end{align*}
	and our aim is to prove that it is equivalent to \(L^\Psi \circ \widetilde{\gamma}\). Let us recall 
	\begin{align*}
		&\mathcal{J}^c:=\{t \geq 0 : H^\Psi L^\Psi_t = t\}\\
		&\mathcal{J} := \bigcup_{n \geq 1} \mathcal{J}_n= \bigcup_{n \geq 1} [l_n^{-}, l_n^+),
	\end{align*}
	and we define 
	\begin{align*}
		\widetilde{D}:= \{t \geq 0 : \widetilde{\gamma}_t \in \mathcal{J}^c\}.
	\end{align*}
	From the definition of $\mathcal{J}$, we obtain that the complement of \(\widetilde{D}\), denoted by \(\widetilde{D}^c\), is
	\begin{align*}
		\widetilde{D}^c:= \bigcup_{n \geq 1} [\widetilde{\gamma}^{-1}_{-}(l_n^{-}), \widetilde{\gamma}^{-1}_{-}(l_n^{+})),
	\end{align*}
	where, as before, \(\widetilde{\gamma}^{-1}_{-}\) is the left-inverse of \(\widetilde{\gamma}\). Since the boundary is shared between \(\widetilde{D}\) and \(\widetilde{D}^c\), we have that \(\partial \widetilde{D}\) is countable (if it were not, its complement could not be countable either). From the continuity of \(\widetilde{\gamma}\) and the fact that \(\widetilde{X}=\widetilde{B}\) on \(\widetilde{D}\), where the process does not jump, we write
	\begin{align}
		\label{pezzo+}
		\lim_{\varepsilon \to 0}\frac{1}{2 \varepsilon} \int_0^t \mathbf{1}_{\{\widetilde{X}_s < \varepsilon, \, s \in \widetilde{D} \cap [0,t]\}} ds &= \lim_{\varepsilon \to 0}\frac{1}{2 \varepsilon} \int_0^t \mathbf{1}_{\{\widetilde{B}_s < \varepsilon, \, s \in \widetilde{D} \cap [0,t]\}} ds \notag\\
		&= \widetilde{\gamma}_{\widetilde{D} \cap [0,t]}.
	\end{align}
	Let us now understand what happens outside \(\widetilde{D}\). We define \(\widetilde{D}_\delta^- := \{t \geq 0 : t \notin \widetilde{D}, \, \widetilde{X} < \delta\}\), for $\delta>0$, and we note
	\begin{align*}
		\widetilde{D}_\delta^-&:= \{t \geq 0 : t \notin \widetilde{D}, \, \widetilde{X} < \delta\}\\
		&=\bigcup_{n \geq 1} [\widetilde{\gamma}^{-1}_{-}(l_n^{-}), \widetilde{\gamma}^{-1}_{-}(l_n^{+})) \cap \{t \geq 0: l_n^+ - \widetilde{\gamma} < \delta\}\\
		&=\bigcup_{n \geq 1} [\widetilde{\gamma}^{-1}_{-}(l_n^{-}), \widetilde{\gamma}^{-1}_{-}(l_n^{+})) \cap [\widetilde{\gamma}^{-1}_{-}(l_n^+ - \delta), + \infty)\\
		&=\bigcup_{n \geq 1} [\max\left\{\widetilde{\gamma}^{-1}_{-}(l_n^-), \widetilde{\gamma}^{-1}_{-}(l_n^+ - \delta)\right\}, \widetilde{\gamma}^{-1}_{-}(l_n^+)).
	\end{align*}
	Since \(\gamma^{-1}_-\) is left-continuous, we have that \(\cap_{\delta>0} \widetilde{D}_\delta^- = \emptyset\) which guarantees that \(\partial \widetilde{D}_\delta^-\) is countable. From the continuity of \(\widetilde{\gamma}\), we obtain
	\begin{align}
		\label{pezzo-}
		\limsup_{\varepsilon \to 0} \frac{1}{2 \varepsilon} \int_0^t \mathbf{1}_{\{\widetilde{X}_s < \varepsilon, \, s \in [0,t] \setminus \widetilde{D}\}} \, ds 
		&\leq \limsup_{\varepsilon \to 0} \frac{1}{2 \varepsilon} \int_0^t \mathbf{1}_{\{\widetilde{B}_s < \varepsilon, \, s \in \widetilde{D}^-_\delta  \cap [0,t]\}} \, ds \notag \\
		&= \widetilde{\gamma}_{\widetilde{D}^-_\delta \cap [0,t]} \xrightarrow{\delta \downarrow 0} 0.
	\end{align}
	By collecting \eqref{pezzo+} and \eqref{pezzo+}, we have that
	\begin{align*}
		\widetilde{\ell}_t=\widetilde{\gamma}_{\widetilde{D} \cap [0,t]}.
	\end{align*}
	From the definition of \(\widetilde{D}\), we see
	\begin{align*}
		\widetilde{\gamma}_{\widetilde{D} \cap [0,t]}= \big\vert \mathcal{J}^c \cap [0, \widetilde{\gamma}_t] \big\vert.
	\end{align*}
	But, on \(\mathcal{J}^c\), since the subordinator does not jump, we have \(dL^\Psi_t = dt\), which concludes the characterization of the local time 
	\begin{align*}
		\widetilde{\ell}_t=\widetilde{\gamma}_{\widetilde{D} \cap [0,t]}= \big\vert \mathcal{J}^c \cap [0, \widetilde{\gamma}_t] \big\vert= L^\Psi \circ \widetilde{\gamma}_t.
	\end{align*}
\end{proof}

\section*{Acknowledgments}

F.C. and M.D. thank Sapienza and the group INdAM-GNAMPA for the support.  
The research has been mostly funded by MUR under the project PRIN 2022 - 2022XZSAFN - CUP B53D23009540006 - PNRR M4.C2.1.1.: Anomalous Phenomena on Regular and Irregular Domains:
Approximating Complexity for the Applied Sciences. \\
Web Site: \url{https://www.sbai.uniroma1.it/~mirko.dovidio/prinSite/index.html} \\
F.C. wishes to thank BCAM-Basque Center for Applied Mathematics for the hospitality during the visiting period from March 12, 2025 to June 11, 2025. \\
G.P. is supported by the Basque Government through the BERC 2022--2025 program and by the Ministry of Science and Innovation: BCAM Severo Ochoa accreditation CEX2021-001142-S / MICIN / AEI / 10.13039/501100011033.

\end{document}